\newtheorem{theorem}{Theorem}[section]
\newtheorem{lemma}[theorem]{Lemma}
\theoremstyle{definition}
\newtheorem{remark}[theorem]{Remark}
\def\XXint#1#2#3{{\setbox0=\hbox{$#1{#2#3}{\int}$}
         \vcenter{\hbox{$#2#3$}}\kern-.5\wd0}}
\numberwithin{equation}{section}
\begin{document}

\title{On traveling waves and global existence for a  nonlinear  Schr\"odinger system with three waves interaction }
\author{ Yuan  Li}
%\thank
\date{}
\maketitle

\begin{abstract}
In this paper, we consider  three components system of  nonlinear Schr\"odinger equations related to the Raman amplification in a plasma.  By using variational method, a new result on the existence of traveling wave solutions are obtained  under the non-mass resonance condition. We also study the new global existence result for oscillating data. Both of our results essentially due to the absence of Galilean symmetry in the system.

\noindent \textbf{Keywords:} Schr\"odinger system; Traveling waves; Global existence

\noindent{\bf 2020 Mathematics Subject Classification:} Primary 35Q55; Secondary 35C07, 35A01.
\end{abstract}

\section{Introduction and Main Results}
\noindent
In this paper, we consider the traveling wave solution and global existence for a Schr\"odinger system
\begin{equation}\label{equ:system}
    \begin{cases}
     i\gamma_1\partial_tu+\Delta u+w\bar{v}=0,\\
     i\gamma_2\partial_tv+\Delta v+w\bar{u}=0,\\
    i\gamma_3\partial_tw+\Delta w+uv=0,
    \end{cases}
\end{equation}
where $\gamma_1,\gamma_2,\gamma_3$ are positive real constants, $\bar{u}$ is the complex conjugate of $u$.

System \eqref{equ:system} arises from the nonlinear interaction between a laser beam and a plasma in the Raman process which is a nonlinear instability phenomenon. This phenomenon is composed by three Schr\"odinger equations coupled to a wave equation and reads in a suitable dimensionless form:
\begin{equation}\label{equ:o}
    \begin{cases}
    (i(\partial_t+v_C\partial_y)+\alpha_1\partial_y^2+\alpha_2\Delta)A_C=\frac{b^2}{2}nA_C-\gamma(\nabla\cdot E)A_Re^{-i\theta},\\
    (i(\partial_t+v_R\partial_y)+\beta_1\partial_y^2+\beta_2\Delta)A_R=\frac{bc}{2}nA_R-\gamma(\nabla\cdot E^*)A_Ce^{-i\theta},\\
    (i\partial_t+\delta_1\Delta)E=\frac{b}{2}nE+\gamma\nabla(A_R^*A_Ce^{i\theta}),\\
    (\partial_t^2-v_s^2\Delta)n=a\delta(|E|^2+b|A_C|^2+c|A_R|^2),
    \end{cases}
\end{equation}
where $A_C, A_R$ and $E$ are complex vectors and are respectively the incident laser field, the back-scattered Raman field and electronic plasma, $n$ is the variation of the density of the ions and $\theta=(k_1y-\omega_2t)$ where $\omega_1=k_1^2\delta$. For more details of definition and the physical explanation of all variables and notations can be found in \cite{CC2004DIE,CCO2009poincare,BDST2002optical,CDS2016nonlinearity}. On a similar method to \cite{CCO2009poincare}, let $E=Fe^{i\theta}$, the system \eqref{equ:o} reads
\begin{equation*}
\begin{cases}
    (iv_C\partial_y+\alpha_2\Delta)A_C=-\gamma ik_1FA_R,\\
    (iv_R\partial_y+\beta_2\Delta)A_R=\gamma ik_1F^{*}A_C,\\
    (2ik_1\partial_y+\delta_1\Delta)F=\gamma ik_1 A_R^*A_C.
    \end{cases}
\end{equation*}
Hence, system \eqref{equ:system} can be regard as the special case of above.

On the other hand, system \eqref{equ:system}  is also related to the second harmonic generation (SHG) in a bulk medium. It can be written as
\begin{equation*}
    \begin{cases}
    2ik_1\partial_zu_1+\Delta u_1+\gamma_1u_3\bar{u}_2e^{-i\Delta kz}=0,\\
    2ik_2\partial_zu_2+\Delta u_2+\gamma_2u_3\bar{u}_1e^{-i\Delta kz}=0,\\
    2ik_3\partial_zu_3+\Delta u_3+\gamma_3u_1u_2e^{i\Delta kz}=0.
    \end{cases}
\end{equation*}
For more details about the physical meaning of this system, one can refer to  \cite{CDS2016nonlinearity,BDST2002optical} and the  references therein.

Let us review some basic properties of system \eqref{equ:system}. The Cauchy problem \eqref{equ:system} is an infinite-dimensional Hamiltonian system, which has the following three conservation laws:

Mass:
\begin{align}\label{mass}
\begin{cases}
M(u,v,w)=\gamma_1\|u\|_{L^2}^2+\gamma_2\|v\|_{L^2}^2+2\gamma_3\|w\|_{L^2}^2,\\
    M_1(u,w)=\gamma_1\|u\|_{L^2}^2+\gamma_3\|w\|_{L^2}^2,\\
    M_2(v,w)=\gamma_2\|v\|_{L^2}^2+\gamma_3\|w\|_{L^2}^2,\\
    M_3(u,v)=\gamma_1\|u\|_{L^2}^2-\gamma_2\|v\|_{L^2}^2.
\end{cases}
\end{align}

Energy:
\begin{align}\label{energy}
    E(u,v,w)=\frac{1}{2}K(u,v,w)-\Re\int uv\bar{w},
\end{align}
where
\begin{align}\label{def:K}
    K(u,v,w)=\int|\nabla u|^2+\int|\nabla v|^2+\int|\nabla w|^2.
\end{align}

Momentum:
\begin{align}\label{momentum}
    P(u,v,w)=\gamma_1(i\nabla u,u)+\gamma_2(i\nabla v,v)+\gamma_3(i\nabla w,w),
\end{align}
where
\[(u,v)=\Re \int u\bar{v}.\]

The system \eqref{equ:system} also has the following symmetries:

Gauge transform:
\[(u,v,w)\mapsto \left(e^{i\theta}u,e^{i\theta}v,e^{2i\theta}w\right), ~~\theta\in\mathbb{R},\]

Scaling:
\[(u,v,w)\mapsto \left(\lambda^2u(\lambda^2t,\lambda x),\lambda^2v(\lambda^2t,\lambda x),\lambda^2w(\lambda^2t,\lambda x)\right),~~\lambda>0.\]
We note that the spatial Sobolev $\dot{H}^{s_c}$ norm with $s_c=\frac{N}{2}-2$ is invariant under the scaling. In particular, \eqref{equ:system} is $L^2$ critical if $s_c=0$ ($N=4$), and energy critical if $s_c=1$ ($N=6$). When $\gamma_1+\gamma_2=\gamma_3$, which is called the mass resonance condition, then system \eqref{equ:system} is invariant under the Galilean transformation
\begin{align*}
    (u,v,w)\mapsto\left(e^{\frac{i\gamma_1}{2}c\cdot x-\frac{i\gamma_1}{4}|c|^2t}u(t,x-ct),e^{\frac{i\gamma_2}{2}c\cdot x-\frac{i\gamma_2}{4}|c|^2t}v(t,x-ct),e^{i\frac{\gamma_3}{2}c\cdot x-\frac{i\gamma_3}{4}|c|^2t}w(t,x-ct)\right).
\end{align*}
for any $c\in\mathbb{R}^N$.

From \cite{NP2021CCM,NP2022CVPDE}, it is well known that the Cauchy problem local well-posedness in $H^1(\mathbb{R}^N)\times H^1(\mathbb{R}^N)\times H^1(\mathbb{R}^N)$, $1\leq N\leq 6$. In particular, if $N=4$, a classical criterion of global-in-time existence for $H^1(\mathbb{R}^4)\times H^1(\mathbb{R}^4)\times H^1(\mathbb{R}^4)$ initial data is derived by
using the sharp Gagliardo-Nirenberg inequality (see Appendix \ref{section:GN})
\begin{align}\label{GN}
    \int uvw\leq \frac{1}{2}\frac{M(u,v,w)^{\frac{1}{2}}}{M(\phi,\psi,\varphi)^{\frac{1}{2}}} K(u,v,w),
\end{align}
for $(u,v,w)\in H^1\times H^1\times H^1$, where $(\phi,\psi,\varphi)$ is a standing wave solution of \eqref{equ:system}, the existence of standing wave solutions can be found in Appendix \ref{section:ground} or the general quadratic-type nonlinearities \cite[Section 4]{NP2021CCM}. It follows from \eqref{GN} and the conservation laws of mass and energy (see \eqref{mass},\eqref{energy}) that if the initial data $(u_0,v_0,w_0)\in H^1(\mathbb{R}^4)\times H^1(\mathbb{R}^4)\times H^1(\mathbb{R}^4) $ satisfies
\begin{align}\label{mass:con}
    M(u_0,v_0,w_0)<M(\phi,\psi,\varphi),
\end{align}
then the corresponding $H^1$-solution of \eqref{equ:system} is global and bounded. The condition \eqref{GN} is sharp in general. Indeed, if the initial data $(u_0,v_0,w_0)\in H^1(\mathbb{R}^4)\times H^1(\mathbb{R}^4)\times H^1(\mathbb{R}^4) $ satisfies
\begin{align*}
     M(u_0,v_0,w_0)>M(\phi,\psi,\varphi),~~E(u_0,v_0,w_0)<0.
\end{align*}
Then the corresponding $H^1$-solution blows up in infinite time, see \cite[Theorem 5.11]{NP2021CCM}.

If we set $\gamma_1=\gamma_2$ and $u=v$, then \eqref{equ:system} reduces to
\begin{equation}\label{equ:2wave}
    \begin{cases}
    i\partial_t+\Delta u+2\bar{u}w=0,\\
    i\partial_t w+k\Delta w+ u^2=0,
    \end{cases}
\end{equation}
which is a simplified model describing Raman amplification phenomena in a plasma, see \cite{HOT2013Poincare}. Hayashi, Ozawa and Tanaka \cite{HOT2013Poincare} studied the Cauchy problem \eqref{equ:2wave} in $H^1$, $L^2$ and the weighted $L^2$ space under mass resonance condition; they also obtained the ground state by using the variational method. Recently, Fukaya, Hayashi and Inui \cite{FHI2022MA} considered the traveling wave solution and global existence of \eqref{equ:2wave} under the non-mass resonance condition. Noguera and Pastor \cite{NP2021CCM,NP2022CVPDE} studied the blowup solution  of the general Schr\"odinger system with quadratic interaction. For more blowup results of the Schr\"odinger system with quadratic interaction, one can see \cite{DF2021ZAMP} and the references therein.

Inspired by \cite{FHI2022MA}, in this paper, we aim to study traveling wave solution and give the new global existence result for oscillating initial data of system \eqref{equ:system}.

\subsection{Traveling wave solutions}
Now, we consider traveling waves solutions of \eqref{equ:system} in the form of
\begin{align}\label{traveling}
    \left(u_{c,w}(t,x),v_{c,w}(t,x),w_{c,w}(t,x)\right)=\left(e^{i\omega t}\phi(x-ct),e^{i\omega t}\psi(x-ct),e^{i2\omega t}\varphi(x-ct)\right).
\end{align}
In particular, when $c=0$, \eqref{traveling} is the form of standing wave solution, we give the existence of this solution under the condition $\gamma_1+\gamma_2=\gamma_3$ in the Appendix \ref{section:ground} or the general result can be found in \cite{NP2021CCM}.  When $\gamma_1+\gamma_2=\gamma_3$, one can obtain traveling wave solutions from the standing wave solutions through the Galilean transformation. On the other hand, when $\gamma_1+\gamma_2\neq\gamma_3$, such a construction does not work due to the lack of Galilean symmetry, Therefore, it is meaningful to study the existence of traveling wave solutions of \eqref{equ:system}.

From now on, we assume that
\[\gamma_1+\gamma_2\neq\gamma_3.\]
For $(\omega,c)\in\mathbb{R}\times\mathbb{R}^N$, \eqref{traveling} is a solution of \eqref{equ:system} if and only if $\left(\phi(x-ct),\psi(x-ct),\varphi(x-ct)\right)$ is a solution of the stationary system
\begin{equation}\label{equ:s}
    \begin{cases}
    -\Delta\phi+\gamma_1\omega\phi+i\gamma_1c\cdot\nabla\phi-\varphi\bar{\psi}=0,\\
    -\Delta\psi+\gamma_2\omega\psi+i\gamma_2 c\cdot\nabla\psi-\varphi\bar{\phi}=0,\\
    -\Delta\varphi+2\gamma_3\omega\varphi+i\gamma_3c\cdot\nabla\varphi-\phi\psi=0.
    \end{cases}
\end{equation}
The energy functional with respect to \eqref{equ:s} is defined by
\begin{align}\label{def:functional}
    S_{\omega,c}(u,v,w)=&E(u,v,w)+\frac{1}{2}\omega M(u,v,w)+\frac{1}{2}c\cdot P(u,v,w),
\end{align}
where $E$, $M$ and $P$ are given by \eqref{energy}, \eqref{mass} and \eqref{momentum}, respectively.
For the sake of argument, we rewrite the energy functional \eqref{def:functional} as
\begin{align}\label{def:f2}
    S_{\omega,c}(u,v,w)=&\frac{1}{2}\int\left|\nabla\left(e^{-\frac{i\gamma_1}{2}c\cdot x}u\right)\right|^2+\frac{1}{2}\left(\gamma_1\omega-\frac{\gamma_1^2|c|^2}{4}\right)\|u\|_{L^2}^2\notag\\
    &+\frac{1}{2}\int\left|\nabla\left(e^{-\frac{i\gamma_2}{2}c\cdot x}v\right)\right|^2+\frac{1}{2}\left(\gamma_2\omega-\frac{\gamma_2^2|c|^2}{4}\right)\|v\|_{L^2}^2\notag\\
    &+\frac{1}{2}\int\left|\nabla\left(e^{-\frac{i\gamma_3}{2}c\cdot x}w\right)\right|^2+\frac{1}{2}\left(2\gamma_3\omega-\frac{\gamma_3^2|c|^2}{4}\right)\|w\|_{L^2}^2-\Re\int uv\bar{w}.
\end{align}
Without loss of generality, we assume that
\[0<\gamma_1\leq \gamma_2.\]
We define the function space $X_{\omega,c}$ by
\begin{align*}
    X_{\omega,c}=\left\{(u,v,w): \left(e^{-\frac{i\gamma_1}{2}c\cdot x}u,e^{-\frac{i\gamma_2}{2}c\cdot x}v,e^{-\frac{i\gamma_3}{2}c\cdot x}w\right)\in \Tilde{X}_{\omega,c} \right\},
\end{align*}
where
\begin{align*}
   \Tilde{X}_{\omega,c}= \begin{cases}
    &H^1(\mathbb{R}^N)\times H^1(\mathbb{R}^N)\times H^1(\mathbb{R}^N),~~\omega>\max\left\{\frac{\gamma_1|c|^2}{4},\frac{\gamma_2|c|^2}{4},\frac{\gamma_3|c|^2}{8}\right\},\\
    &H^1(\mathbb{R}^N)\times H^1(\mathbb{R}^N)\times \dot{H}^1(\mathbb{R}^N),~~\omega=\frac{\gamma_3|c|^2}{8},~~\gamma_1\leq\gamma_2<\frac{\gamma_3}{2},\\
    &H^1(\mathbb{R}^N)\times \dot{H}^1(\mathbb{R}^N)\times \dot{H}^1(\mathbb{R}^N),~~\omega=\frac{\gamma_3|c|^2}{8},~~\gamma_1<\gamma_2=\frac{\gamma_3}{2},\\
    &H^1(\mathbb{R}^N)\times\dot{H}^1(\mathbb{R}^N)\times H^1(\mathbb{R}^N),~~\omega=\frac{\gamma_2|c|^2}{4},~~\max\left\{\gamma_1,\frac{\gamma_3}{2}\right\}<\gamma_2,\\
    &\dot{H}^1(\mathbb{R}^N)\times\dot{H}^1(\mathbb{R}^N)\times H^1(\mathbb{R}^N),~~\omega=\frac{\gamma_2|c|^2}{4},~~\frac{\gamma_3}{2}< \gamma_1=\gamma_2.
    \end{cases}
\end{align*}
We note that $S_{\omega,c}$ is defined on $X_{\omega,c}$ if
\begin{align}\label{D:condition}
    \begin{cases}
    (A)~~1\leq N\leq 5,~~\omega>\max\left\{\frac{\gamma_1|c|^2}{4},\frac{\gamma_2|c|^2}{4},\frac{\gamma_3|c|^2}{8}\right\},\\
    (B)~~3\leq N\leq 5,~~\omega=\frac{\gamma_3|c|^2}{8},~~\gamma_1\leq\gamma_2<\frac{\gamma_3}{2},\\
    (C)~~4\leq N\leq 5,~~\omega=\frac{\gamma_3|c|^2}{8},~~\gamma_1<\gamma_2=\frac{\gamma_3}{2},\\
    (D)~~3\leq N\leq 5,~~\omega=\frac{\gamma_2|c|^2}{4},~~\max\left\{\gamma_1,\frac{\gamma_3}{2}\right\}<\gamma_2,\\
    (E)~~4\leq N\leq 5,~~\omega=\frac{\gamma_2|c|^2}{4},~~\frac{\gamma_3}{2}< \gamma_1=\gamma_2.
    \end{cases}
\end{align}
The dimensional conditions in $(B)$, $(C)$, $(D)$ and $(E)$ come from the Sobolev embedding $\dot{H}^1\hookrightarrow L^{\frac{2N}{N-2}}$, which is used to control the nonlinear term.

We denote the set of all nontrivial solutions of \eqref{equ:s} by
\begin{align*}
    \mathcal{A}_{\omega,c}=\{(\phi,\psi,\varphi)\in X_{\omega,c}~:~(\phi,\psi,\varphi)\neq(0,0,0),~S^{\prime}_{\omega,c}(\phi,\psi,\varphi)=0\}
\end{align*}
and the set of all boosted ground states
\begin{align*}
    \mathcal{G}_{\omega,c}=\{(\phi,\psi,\varphi)\in\mathcal{A}_{\omega,c},~S_{\omega,c}(\phi,\psi,\varphi)\leq S_{\omega,c}(\phi_1,\psi_1,\varphi_1),~\text{for all}~ (\phi_1,\psi_1,\varphi_1)\in\mathcal{A}_{\omega,c}\}.
\end{align*}
In particular, if $c=0$,  $\mathcal{G}_{\omega,0}$ is the set of all ground states.

For $(\phi,\psi,\varphi)$ satisfying \eqref{equ:s},  let
\begin{align*}
    (\phi,\psi,\varphi)=\left(e^{\frac{i\gamma_1}{2}c\cdot x}\Tilde\phi,e^{\frac{i\gamma_2}{2}c\cdot x}\Tilde\psi,e^{i\frac{\gamma_3}{2}c\cdot x}\Tilde\varphi\right).
\end{align*}
Then $(\Tilde\phi,\Tilde\psi,\Tilde\varphi)$ satisfies
\begin{align}\label{sys:complex}
    \begin{cases}
    -\Delta\Tilde{\phi}+\left(\omega-\frac{|c|^2}{4}\right)\Tilde{\phi}-e^{i\left(\frac{\gamma_3}{2}-\frac{\gamma_1+\gamma_2}{2}\right)c\cdot x}\Tilde{\varphi}\bar{\Tilde{\psi}} =0,\\
    -\Delta\Tilde{\psi}+\left(\omega-\frac{|c|^2}{4}\right)\Tilde{\psi}-e^{i\left(\frac{\gamma_3}{2}-\frac{\gamma_1+\gamma_2}{2}\right)c\cdot x}\Tilde{\varphi}\bar{\Tilde{\phi}} =0,\\
    -\Delta\Tilde{\varphi}+\left(2\gamma_3\omega-\frac{\gamma_3^2|c|^2}{4}\right)\Tilde{\varphi}-e^{i\left(\frac{\gamma_1+\gamma_2}{2}-\frac{\gamma_3}{2}\right)c\cdot x}\Tilde{\phi}\Tilde{\psi}=0.\\
    \end{cases}
\end{align}
Note that the solutions of \eqref{sys:complex} with $\gamma_1+\gamma_2\neq\gamma_3$ (non mass resonance condition) are essentially non-radial and complex-valued.

Now we state our first result.
\begin{theorem}\label{Thm1}
Assume \eqref{D:condition} and assume further $N=5$ for the cases $(C)$ and $(E)$. Then there exist traveling wave solutions of \eqref{equ:system} in the form of \eqref{traveling}.
\end{theorem}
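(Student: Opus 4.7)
My plan is to construct boosted ground states in $\mathcal{G}_{\omega,c}$ by a constrained variational method, from which traveling waves follow via \eqref{traveling}. First I would pass to the twisted variables $(\tilde\phi,\tilde\psi,\tilde\varphi) = (e^{-i\gamma_1 c\cdot x/2}\phi,\ e^{-i\gamma_2 c\cdot x/2}\psi,\ e^{-i\gamma_3 c\cdot x/2}\varphi)$, in which \eqref{def:f2} becomes
\[
S_{\omega,c} \;=\; \tfrac12\,T_{\omega,c}(\tilde\phi,\tilde\psi,\tilde\varphi)\;-\;\Re\!\int e^{i\sigma c\cdot x}\,\tilde\phi\,\tilde\psi\,\bar{\tilde\varphi}, \qquad \sigma=\tfrac{\gamma_1+\gamma_2-\gamma_3}{2}\neq 0,
\]
where $T_{\omega,c}$ is the diagonal kinetic-plus-mass quadratic form (positive semi-definite on $X_{\omega,c}$, with one or two mass coefficients vanishing in the critical cases (B)--(E)). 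Condition \eqref{D:condition} together with H\"older and the Sobolev embedding $\dot H^1\hookrightarrow L^{2N/(N-2)}$ ensure the trilinear form is continuous on $X_{\omega,c}$. The oscillating factor $e^{i\sigma c\cdot x}$ precludes any reduction to real or radial minimizers, so the right tool is concentration-compactness rather than symmetrization.

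\textbf{Variational setup and compactness.} I would minimize $S_{\omega,c}$ on the Nehari manifold
\[
\mathcal{N}_{\omega,c}\;=\;\bigl\{(u,v,w)\in X_{\omega,c}\setminus\{(0,0,0)\}:\ \partial_\lambda S_{\omega,c}(\lambda u,\lambda v,\lambda w)\big|_{\lambda=1}=0\bigr\},
\]
and call the infimum $d_{\omega,c}$. Since the nonlinear part is homogeneous of degree three, each ray through $(u,v,w)$ with $\Re\int uv\bar w>0$ meets $\mathcal{N}_{\omega,c}$ at a unique $\lambda^{\ast}>0$ realizing $\max_{\lambda}S_{\omega,c}(\lambda\,\cdot)$, so $\mathcal{N}_{\omega,c}$ is a $C^1$ manifold and $S_{\omega,c}=\tfrac16 T_{\omega,c}$ on it. Thus $d_{\omega,c}$ is equivalent to a sharp Gagliardo--Nirenberg-type quotient $T_{\omega,c}^{3/2}\big/\Re\!\int uv\bar w$. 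For a minimizing sequence, the coercivity of $T_{\omega,c}$ gives boundedness in $X_{\omega,c}$. The only non-compact symmetry is spatial translation, under which $S_{\omega,c}$ is invariant in the original untwisted variables; so I would translate by suitable $y_n\in\mathbb{R}^N$ and apply Lions' concentration-compactness, ruling out vanishing by $d_{\omega,c}>0$ and dichotomy by strict subadditivity, which follows from the homogeneity of the quotient. The resulting nontrivial weak limit is a constrained minimizer; since the Nehari multiplier vanishes for scale-homogeneous constraints, it is an unconstrained critical point, hence lies in $\mathcal{G}_{\omega,c}$. Undoing the twist and invoking \eqref{traveling} yields the desired traveling wave.

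\textbf{The main obstacle.} The hardest step is the critical cases (B)--(E): some components of $(\tilde\phi,\tilde\psi,\tilde\varphi)$ live only in $\dot H^1$, so the relevant embedding $\dot H^1\hookrightarrow L^{2N/(N-2)}$ is non-compact and minimizing sequences may concentrate along Aubin--Talenti bubbles. To close the concentration-compactness argument I would establish the strict gap $d_{\omega,c}<\mathcal{S}_{\ast}$ separating $d_{\omega,c}$ from the Sobolev-optimal constant that a bubble would realize. Testing the Rayleigh quotient against a rescaled bubble profile and expanding to leading order in the concentration scale produces a dimension-dependent inequality. In (B) and (D), where only one component is critical, this inequality is satisfied for every $3\le N\le 5$; in (C) and (E), where two components are critical, the exponent bookkeeping tightens and the gap is only available when $N=5$, which is exactly the dimensional restriction in the statement. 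In case (A), subcriticality makes this step automatic. Once this gap is verified the above compactness scheme closes and Theorem~\ref{Thm1} follows.
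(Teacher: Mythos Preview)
Your Nehari-manifold framework, the passage to twisted variables, and the use of translation invariance all match the paper. The gap is in your diagnosis of the critical cases.

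You write that vanishing is ruled out simply by $d_{\omega,c}>0$ and that the hard step is excluding concentration along Aubin--Talenti bubbles via an energy gap $d_{\omega,c}<\mathcal{S}_*$. In fact no bubble analysis or Sobolev-threshold comparison is needed anywhere: once a translated minimizing sequence has \emph{some} nontrivial weak limit $(u,v,w)$, the Brezis--Lieb splitting of $\tilde V$ together with the elementary fact that $\tilde N_{\omega,c}<0$ forces $\tfrac13\tilde Q_{\omega,c}>\tilde\mu_{\omega,c}$ already yields $\tfrac13\tilde Q_{\omega,c}(u,v,w)=\tilde\mu_{\omega,c}$ and hence strong convergence. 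This step is identical in all five cases and never sees a critical threshold.

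The genuine obstacle is precisely the one you dismissed: producing a nontrivial weak limit after translation. In cases (B)--(E) some components lie only in $\dot H^1$, and positivity of $d_{\omega,c}$ alone does not prevent spreading. The paper exploits that at least one component still carries a strictly positive mass coefficient and therefore sits in $H^1$. From $\tilde V(u_n,v_n,w_n)\to 2\tilde\mu_{\omega,c}>0$ and H\"older, that component has an $L^p$ norm bounded below, where $p$ is the exponent dictated by pairing against the $\dot H^1\hookrightarrow L^{2N/(N-2)}$ factors; Lieb's compactness lemma then supplies the shifts $y_n$. In (B) and (D) two components are in $H^1$ and the exponent is $\tfrac{4N}{N+2}\in\bigl(2,\tfrac{2N}{N-2}\bigr)$ for every $3\le N\le 5$. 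In (C) and (E) only a single component is in $H^1$ and the forced exponent is $\tfrac{N}{2}$; the requirement $\tfrac{N}{2}\in\bigl(2,\tfrac{2N}{N-2}\bigr)$ gives $N>4$, whence $N=5$. For $N=4$ one only controls the $L^2$ norm of that component, which does not exclude vanishing, and the paper explicitly leaves this case open. Your bubble-expansion heuristic does not account for this dimensional restriction and is aimed at a non-issue.
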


\begin{remark}
 In the cases $(C)$ and $(E)$ with $N=4$, we cannot rule out the vanishing of minimizing sequences. This difficulty is related to sequence compactness
\end{remark}

Next, we show that nonexistence of nontrivial solutions for \eqref{sys:complex} with $\gamma_1+\gamma_2=\gamma_3$ (mass resonance condition).
\begin{theorem}\label{Thm2}
Let $1\leq N\leq 5$,  $\gamma_1+\gamma_2=\gamma_3$ and $\omega=\frac{\gamma_1|c|^2}{4}=\frac{\gamma_2|c|^2}{4}$. If $(\phi,\psi,\varphi)$ is a solution of \eqref{equ:s} satisfying $\left(e^{-\frac{i\gamma_1}{2}c\cdot x}\phi,e^{-\frac{i\gamma_2}{2}c\cdot x}\psi,e^{-\frac{i\gamma_3}{2}c\cdot x}\varphi\right)\in \dot{H}^1(\mathbb{R}^N)\times\dot{H}^1(\mathbb{R}^N)\times\dot{H}^1(\mathbb{R}^N)$ and $\phi\psi\bar\varphi\in L^1$, then $(\phi,\psi,\varphi)=(0,0,0)$.
\end{theorem}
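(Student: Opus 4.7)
The plan is to reduce \eqref{equ:s} to a massless nonlinear system and then close via a Nehari identity combined with a Pohozaev identity. First, I apply the gauge substitution $(\phi,\psi,\varphi)=(e^{i\gamma_1 c\cdot x/2}\tilde\phi,e^{i\gamma_2 c\cdot x/2}\tilde\psi,e^{i\gamma_3 c\cdot x/2}\tilde\varphi)$ to pass from \eqref{equ:s} to \eqref{sys:complex}. Under mass resonance $\gamma_1+\gamma_2=\gamma_3$ the oscillating factors $e^{i(\gamma_3-\gamma_1-\gamma_2)c\cdot x/2}$ collapse to $1$, while the hypothesis $\omega=\gamma_1|c|^2/4=\gamma_2|c|^2/4$ makes every linear coefficient in \eqref{sys:complex} vanish: it forces $\gamma_1=\gamma_2$ when $c\ne 0$, and then $\gamma_3=2\gamma_1$ gives $2\gamma_3\omega-\gamma_3^2|c|^2/4=0$ as well. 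Consequently $(\tilde\phi,\tilde\psi,\tilde\varphi)$ satisfies the massless triple
\[
-\Delta\tilde\phi=\tilde\varphi\bar{\tilde\psi},\qquad -\Delta\tilde\psi=\tilde\varphi\bar{\tilde\phi},\qquad -\Delta\tilde\varphi=\tilde\phi\tilde\psi,
\]
and the integrability hypothesis $\phi\psi\bar\varphi\in L^1$ becomes $\tilde\phi\tilde\psi\bar{\tilde\varphi}\in L^1$.

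Second, I pair each equation with the complex conjugate of its own unknown and take real parts to obtain the Nehari identities
\[
\|\nabla\tilde\phi\|_{L^2}^2=\|\nabla\tilde\psi\|_{L^2}^2=\|\nabla\tilde\varphi\|_{L^2}^2=\Re\int\tilde\phi\tilde\psi\bar{\tilde\varphi}\,dx=:I.
\]
Then I pair each equation with $x\cdot\nabla$ applied to the conjugate of its own unknown and again take real parts; the standard Pohozaev computation on the Laplacian side produces $-\frac{N-2}{2}\|\nabla\tilde\phi\|_{L^2}^2$ for the first equation, and analogously for the other two. The three nonlinear right-hand sides combine by the Leibniz rule into
\[
\Re\int x\cdot\nabla(\tilde\phi\tilde\psi\bar{\tilde\varphi})\,dx=-N\Re\int\tilde\phi\tilde\psi\bar{\tilde\varphi}\,dx=-NI
\]
after one more integration by parts. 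Summing the three Pohozaev identities and invoking the Nehari relations yields $-\tfrac{3(N-2)}{2}I=-NI$, i.e.\ $(N-6)I=0$. Since $1\le N\le 5$, this forces $I=0$, whence $\|\nabla\tilde\phi\|_{L^2}=\|\nabla\tilde\psi\|_{L^2}=\|\nabla\tilde\varphi\|_{L^2}=0$, so $(\tilde\phi,\tilde\psi,\tilde\varphi)=(0,0,0)$ in $\dot H^1(\mathbb{R}^N)$ and therefore $(\phi,\psi,\varphi)=(0,0,0)$.

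The main technical obstacle is to justify that $\bar{\tilde\phi}$ and $x\cdot\nabla\bar{\tilde\phi}$ may be used as test functions: the unknowns only lie in $\dot H^1$ and need not belong to $L^2$, so the required integrations by parts are not a priori legal, and the Pohozaev multiplier carries an extra factor of $|x|$. I would handle this by a cutoff-and-limit argument: insert $\chi(\cdot/R)$ with $\chi\in C_c^\infty(\mathbb{R}^N)$ and $\chi\equiv 1$ near the origin, perform each integration by parts against the regularized test function, and let $R\to\infty$. The $\dot H^1$ bounds control the quadratic boundary contributions on the annulus $R\le|x|\le 2R$ via Cauchy--Schwarz, while the assumption $\phi\psi\bar\varphi\in L^1$ is exactly what drives the cubic boundary term to zero in the limit by dominated convergence.
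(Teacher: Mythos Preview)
Your proof is correct and follows essentially the same strategy as the paper: gauge-transform to the massless system, then combine the Nehari identity $K(\tilde\phi,\tilde\psi,\tilde\varphi)=3I$ with the Pohozaev identity $K(\tilde\phi,\tilde\psi,\tilde\varphi)=\tfrac{N}{2}I$ to force $(N-6)I=0$. The only cosmetic difference is that the paper obtains the Pohozaev relation by differentiating $E(\tilde\phi_\lambda,\tilde\psi_\lambda,\tilde\varphi_\lambda)$ along the $L^2$-scaling $u_\lambda=\lambda^{N/2}u(\lambda\cdot)$ at $\lambda=1$, whereas you derive it directly via the multiplier $x\cdot\nabla$; these are equivalent, and your explicit cutoff discussion for justifying the integrations by parts in $\dot H^1$ is a point the paper does not spell out.
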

{\bf Comment:}

 For the cases $(B)$, $(C)$, $(D)$ and $(E)$, the coefficient of either $\phi$ and $\psi$ or $\varphi$ in \eqref{sys:complex} vanishes, which corresponds to ``zero mass" case in the elliptic problem. Zero mass problem appear in various situations, see \cite{DD2002JMPA,LN2020CVPDE,MP1990ARMA,MM2014JEMS,T1976AMPA,HLZ2021DCDS}. Our existence result for the cases $(B)$, $(C)$, $(D)$ and $(E)$ come from the fact that the coefficient of $L^2$-norms in the energy functional \eqref{def:f2} do not vanish at the same time. This not true for the case $\gamma_1+\gamma_2=\gamma_3$, see Theorem \ref{Thm2}. Therefore, the lack of symmetries yields the new and nontrivial existence result, and the solutions of \eqref{sys:complex} are non-radial and complex-valued.

\subsection{Global existence}
Finally, we give the following global result for arbitrarily large data with modification of oscilations in four dimension.
\begin{theorem}\label{Thm3}
Assume $N=4$ and $\gamma_1+ \gamma_2\neq\gamma_3$.  Let $c\in\mathbb{R}^4$,  $u_0,v_0,w_0\in H^1(\mathbb{R}^4)$ and
\begin{align}\label{initial}
    (u_{0,c},v_{0,c},w_{0,c})=\left(e^{\frac{i\gamma_1}{2}c\cdot x}u_0,e^{\frac{i\gamma_2}{2}c\cdot x}v_0,e^{\frac{i\gamma_3}{2}c\cdot x}w_0\right).
\end{align}
Then the following statements hold.

(i) If $\gamma_3>\gamma_1+\gamma_2$,  there exist  $A_0,A_1>0$ such that $\max\left\{\|u_0\|_{L^2}^2,\|v_0\|_{L^2}^2\right\}<A_0$  and $|c|\geq A_1$, $c\in\mathbb{R}^4$, then  the $H^1$-solution of \eqref{equ:system} with the initial data \eqref{initial} exists globally in time.

(ii) If $\gamma_3<\gamma_1+\gamma_2$ and $\gamma_1< \gamma_2$, there exists $B_0,B_1>0$ such that $\max\left\{\|u_0\|_{L^2}^2,\|w_0\|_{L^2}^2\right\}<B_0$ and $|c|>B_1$, then the $H^1$-solution of \eqref{equ:system} with the initial data \eqref{initial} exists globally in time.

(ii) If $\gamma_3<\gamma_1+\gamma_2$ and $\gamma_1> \gamma_2$, there exists $C_0,C_1>0$ such that $\max\left\{\|v_0\|_{L^2}^2,\|w_0\|_{L^2}^2\right\}<C_0$ and $|c|>C_1$, then the $H^1$-solution of \eqref{equ:system} with the initial data \eqref{initial} exists globally in time.

In particular, if  $\gamma_3<\gamma_1+\gamma_2$ and $\gamma_1= \gamma_2$, there  exists $D_0,D_1>0$ such that $\|w_0\|_{L^2}^2<D_0$ and $|c|>D_1$, then the $H^1$-solution is global.
\end{theorem}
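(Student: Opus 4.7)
The plan is to adapt the two-wave strategy of Fukaya--Hayashi--Inui \cite{FHI2022MA} to the present three-component setting, exploiting the conserved action $S_{\omega,c}(u,v,w)=E(u,v,w)+\tfrac{\omega}{2}M(u,v,w)+\tfrac{c}{2}\!\cdot\!P(u,v,w)$ (conserved for every $\omega\in\mathbb{R}$ by the conservation laws \eqref{mass}, \eqref{energy}, \eqref{momentum}) together with its representation \eqref{def:f2} in the de-oscillated variables
\[
\tilde u(t,x)=e^{-i\gamma_1 c\cdot x/2}u(t,x),\quad \tilde v(t,x)=e^{-i\gamma_2 c\cdot x/2}v(t,x),\quad \tilde w(t,x)=e^{-i\gamma_3 c\cdot x/2}w(t,x).
\]
Because $\delta:=(\gamma_1+\gamma_2-\gamma_3)/2\neq0$ (non-mass resonance), the cubic term transforms as $\Re\int uv\bar w=\Re\int e^{i\delta c\cdot x}\tilde u\tilde v\bar{\tilde w}$, and a single integration by parts in $x$ will produce the $|c|^{-1}$ gain that drives the whole argument.

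For each of the cases (i)--(iv) I would pick $\omega^*=\omega^*(c)$ proportional to $|c|^2$ so that, in \eqref{def:f2}, the $L^2$-coefficient of the component whose $L^2$-norm is \emph{not} assumed small vanishes and the remaining two coefficients are nonnegative; in case (i) the natural choice is $\omega^*=\gamma_3|c|^2/8$ when $\gamma_3\geq 2\gamma_2$ and $\omega^*=\gamma_2|c|^2/4$ otherwise, and cyclic analogues handle (ii), (iii), and the special case. Conservation of $S_{\omega^*,c}$ combined with dropping the nonnegative mass terms then yields
\[
\tfrac12 K(\tilde u,\tilde v,\tilde w)(t)\leq C_0+\Big|\Re\!\int e^{i\delta c\cdot x}\tilde u(t)\tilde v(t)\bar{\tilde w}(t)\Big|,\qquad C_0:=S_{\omega^*,c}(u_{0,c},v_{0,c},w_{0,c}),
\]
in which $C_0=O(|c|^2)\cdot(\text{smallness parameter})+O(1)$ by the smallness hypothesis on the designated $L^2$-norms and the choice of $\omega^*$. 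The oscillating cubic integral is controlled by integration by parts together with Sobolev $H^1(\mathbb{R}^4)\hookrightarrow L^4(\mathbb{R}^4)$:
\[
\Big|\Re\!\int e^{i\delta c\cdot x}\tilde u\tilde v\bar{\tilde w}\Big|\leq\frac{\|\nabla(\tilde u\tilde v\bar{\tilde w})\|_{L^1}}{|\delta c|}\leq\frac{C}{|c|}\,K(\tilde u,\tilde v,\tilde w)(t)^{3/2}.
\]

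Combining gives the closed bootstrap inequality $K(\tilde u,\tilde v,\tilde w)(t)\leq 2C_0+(2C/|c|)\,K(\tilde u,\tilde v,\tilde w)(t)^{3/2}$, and a standard continuity argument (initialized at $K(\tilde u(0))=K(u_0,v_0,w_0)\ll|c|^2$ for $|c|\geq A_1$) propagates the bound $K(\tilde u,\tilde v,\tilde w)(t)\leq 4C_0$ on the whole lifespan, provided $|c|$ is large enough that $C|c|^{-1}\sqrt{4C_0}\leq\tfrac12$; the smallness of the designated $L^2$-parameter is exactly what keeps this threshold in $|c|$ finite. Finally, the $L^2$-norms of $u,v,w$ are bounded via \eqref{mass}, and the pointwise identity $\|\nabla u\|_{L^2}^2=\|\nabla\tilde u\|_{L^2}^2+\gamma_1 c\cdot\Im\!\int\bar{\tilde u}\nabla\tilde u+\tfrac{\gamma_1^2|c|^2}{4}\|u\|_{L^2}^2$ (and analogues for $v,w$) upgrades the bound on $K(\tilde u,\tilde v,\tilde w)$ to a full $H^1$-bound on $(u,v,w)(t)$, so the blow-up alternative for the local theory \cite{NP2021CCM} yields global existence. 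The main obstacle is the bookkeeping needed to choose $\omega^*$ in the delicate sub-cases, in particular case (i) with $\gamma_3<2\gamma_2$, where no admissible $\omega^*$ simultaneously zeros the $w$-coefficient and keeps the $v$-coefficient nonnegative, and one has to absorb the remaining $O(|c|^2)$-size mass term into $C_0$ by exploiting the conservation laws $M_1,M_2$ and the smallness of the restricted $L^2$-data.
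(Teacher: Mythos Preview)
Your approach is correct but genuinely different from the paper's. The paper does not use integration by parts or a bootstrap at all: instead it invokes the variational machinery of Section~2, namely the positive Nehari level $\mu_{\omega,c}$ (Lemma~\ref{lemma:positive}), its scaling $\mu_{\omega,c}=|c|^{2}\mu_{\omega/|c|^{2},\,c/|c|}$ (Lemma~\ref{lemma:scaling}), and the invariance of the sublevel set $A^{+}_{\omega,c}=\{S_{\omega,c}<\mu_{\omega,c},\ N_{\omega,c}\geq 0\}$ under the flow (Lemmas~\ref{lemma:invariant}--\ref{lemma:global}). With $\omega=\gamma_3|c|^2/8$ in case~(i) (and cyclic analogues), the paper checks that the oscillating data lies in $A^{+}_{\omega,c}$ for $|c|$ large by using Riemann--Lebesgue (not integration by parts) to make $V(u_{0,c},v_{0,c},w_{0,c})\to 0$, and by using the smallness hypothesis on $\|u_0\|_{L^2},\|v_0\|_{L^2}$ to beat the $|c|^2$ growth of the mass coefficients against the $|c|^2$ growth of $\mu_{\omega,c}$. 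Once inside $A^{+}_{\omega,c}$, the kinetic energy is bounded by $3\mu_{\omega,c}$ for all time --- no continuity argument needed. What your route buys is self-containment: it avoids the entire construction of $\mu_{\omega,c}$ and the traveling-wave theory, and the integration-by-parts bound $|\widetilde V(t)|\leq C|c|^{-1}K(\tilde u,\tilde v,\tilde w)^{3/2}$ is more quantitative than Riemann--Lebesgue. What the paper's route buys is that the uniform-in-time control comes for free from the invariant set, so no bootstrap is required and the threshold $A_0$ is given explicitly in terms of $\mu_{\gamma_3/8,\,c/|c|}$; in particular the ``delicate sub-case'' you flag (case~(i) with $\gamma_3<2\gamma_2$) costs the paper no extra work, since the negative $v$-mass coefficient only helps the inequality $S_{\omega,c}<\mu_{\omega,c}$. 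Your proposed fix for that sub-case --- keep $\omega^*=\gamma_3|c|^2/8$ and use the conserved quantities $M_1,M_2$ to convert the uncontrolled $\|v(t)\|_{L^2}^2$ into $\|u_0\|_{L^2}^2$ plus a nonnegative $\|w(t)\|_{L^2}^2$ term --- is the right one and closes the argument; the intermediate suggestion $\omega^*=\gamma_2|c|^2/4$ does not, since it makes the $w$-coefficient positive and thereby reintroduces an $O(|c|^2)\|w_0\|_{L^2}^2$ term that is not assumed small in~(i).
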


{\bf Comments:}

1. When $\gamma_1+\gamma_2=\gamma_3$, Under the transformation of initial data
\begin{align}\label{initial:1}
    (u_{0,c},v_{0,c},w_{0,c})=\left(e^{\frac{i}{2}c\cdot x}u_0,e^{\frac{i}{2}c\cdot x}v_0,e^{\frac{i\gamma_3}{2}c\cdot x}w_0\right),
\end{align}
global properties of the solution do not change due to the Galilean invaraince.  However, when $\gamma_1+\gamma_2\neq\gamma_3$, the momentum change of the initial data by \eqref{initial:1}, this essentially influences global properties of the solution, which comes from the lack of Galilean invariance.

2. Theorem \ref{Thm1} and Theorem \ref{Thm3} implies that if we consider \eqref{equ:system} in the non-radial regime, there is an essential difference in global dynamics between the cases of mass resonance and non-mass resonance.

3. When $N=4$, in the radial case, the global existence  condition \eqref{mass:con} of the $H^1$-solution is optimal. However, Theorem \ref{Thm3} means that we only need to restrict the $L^2$-norm of  some initial data to obtain the global existence. This is due to the lack of Galilean invariance in the system.

This paper is organized as follows: in Section 2, we show the existence of traveling wave solution of system \eqref{equ:s} by the variational method; in Section 3, the non-existence of solution for \eqref{equ:s} will be derived; in Section 4, we establish the global existence result for oscillating initial data  and the finally section is Appendix.

\section{Existence of the traveling wave solution}
In this section, we aim to  prove the existence of traveling wave solutions by solving variational problems on the Nehari manifold.

For the sake of simple, we use the following notation. We set
\begin{align*}
    Q_{\omega,c}(u,v,w)=&\frac{1}{2}K(u,v,w)
    +\frac{\omega\gamma_1}{2}\|u\|_{L^2}^2+\frac{\omega\gamma_2}{2}\|v\|_{L^2}^2+\gamma_3\omega\|w\|_{L^2}^2+\frac{1}{2}c\cdot P(u,v,w),\\
    V(u,v,w)=&\Re\int uv\bar{w}.
\end{align*}
The energy functional \eqref{def:functional} reads
\[S_{\omega,c}(u,v,w)=Q_{\omega,c}(u,v,w)-V(u,v,w).\]
Now we introduce the Nehari functional
\begin{align*}
    N_{\omega,c}(u,v,w)=\partial_\lambda S_{\omega,c}(\lambda u,\lambda v,\lambda w)|_{\lambda=1}=2Q_{\omega,c}(u,v,w)-3V(u,v,w).
\end{align*}
We also set
\begin{align*}
    \widetilde{Q}_{\omega,c}(u,v,w)=&\frac{1}{2}K(u,v,w)
    +\frac{1}{2}\left(\gamma_1\omega-\frac{\gamma_1^2|c|^2}{4}\right)\|u\|_{L^2}^2\\
    &+\frac{1}{2}\left(\gamma_2\omega-\frac{\gamma_2^2|c|^2}{4}\right)\|v\|_{L^2}^2+\frac{1}{2}\left(2\gamma_3\omega-\frac{\gamma_3^2|c|^2}{4}\right)\|w\|_{L^2}^2,\\
    \widetilde{V}(u,v,w)=&\Re\int e^{i\left(\frac{\gamma_1+\gamma_2}{2}-\frac{\gamma_3}{2}\right)c\cdot x} uv\bar{w}.
\end{align*}
If
\[(\Tilde{u},\Tilde{v},\Tilde{w})=\left(e^{-\frac{i\gamma_1}{2}c\cdot x}u,e^{-\frac{i\gamma_2}{2}c\cdot x}v,e^{-i\frac{\gamma_3}{2}c\cdot x}w\right),\]
then we have the following relations
\begin{align*}
    \Tilde{Q}_{\omega,c}(\Tilde{u},\Tilde{v},\Tilde{w})=Q_{\omega,c}(u,v,w),~~\widetilde{V}(\Tilde{u},\Tilde{v},\Tilde{w})=V(u,v,w).
\end{align*}
The corresponding  functionals
\begin{align*}
    \Tilde{S}_{\omega,c}(u,v,w)=&\Tilde{Q}_{\omega,c}(u,v,w)-\Tilde{V}(u,v,w),\\
    \Tilde{N}_{\omega,c}=&\partial_\lambda\Tilde{S}_{\omega,c}(\lambda u,\lambda v,\lambda w)|_{\lambda=1}=2\Tilde{Q}_{\omega,c}(u,v,w)-3\Tilde{V}(u,v,w).
\end{align*}
The minimization problem
\begin{align*}
    \mu_{\omega,c}:=\inf\{S_{\omega,c}(\phi,\psi,\varphi)~:~(\phi,\psi,\varphi)\in \mathcal{N}_{\omega,c}\},
\end{align*}
where
\begin{align*}
    \mathcal{N}_{\omega,c}=\{(\phi,\psi,\varphi)\in X_{\omega,c},~(\phi,\psi,\varphi)\neq(0,0,0),~N_{\omega,c}(\phi,\psi,\varphi)=0\}.
\end{align*}
We define the minimizers $a_{\omega,c}$ by
\begin{align*}
    a_{\omega,c}=\{(\phi,\psi,\varphi)\in \mathcal{N}_{\omega,c}~:~S_{\omega,c}(\phi,\psi,\varphi)=\mu_{\omega,c}\}.
\end{align*}
We also use the following notations.
\begin{align*}
     &\widetilde{\mathcal{A}}_{\omega,c}=\{(\phi,\psi,\varphi)\in \Tilde{X}_{\omega,c}~:~(\phi,\psi,\varphi)\neq(0,0,0),~\Tilde{S}^{\prime}_{\omega,c}(\phi,\psi,\varphi)=0\},\\
      &\widetilde{\mathcal{G}}_{\omega,c}=\{(\phi,\psi,\varphi)\in\widetilde{\mathcal{A}}_{\omega,c},~\Tilde{S}_{\omega,c}(\phi,\psi,\varphi)\leq \Tilde{S}_{\omega,c}(\phi_1,\psi_1,\varphi_1),~\text{for all}~ (\phi_1,\psi_1,\varphi_1)\in\Tilde{\mathcal{A}}_{\omega,c}\},\\
      &\Tilde{\mu}_{\omega,c}:=\inf\{\Tilde{S}_{\omega,c}(\phi,\psi,\varphi)~:~(\phi,\psi,\varphi)\in \Tilde{\mathcal{N}}_{\omega,c}\},\\
      &\Tilde{\mathcal{N}}_{\omega,c}=\{(\phi,\psi,\varphi)\in \Tilde{X}_{\omega,c},~(\phi,\psi,\varphi)\neq(0,0,0),~\Tilde{N}_{\omega,c}(\phi,\psi,\varphi)=0\},\\
       &\Tilde{a}_{\omega,c}=\{(\phi,\psi,\varphi)\in \Tilde{\mathcal{N}}_{\omega,c}~:~\Tilde{S}_{\omega,c}(\phi,\psi,\varphi)=\Tilde{\mu}_{\omega,c}\}.
\end{align*}
Notice that
\begin{align*}
    (\phi,\psi,\varphi)\in {\mathcal{G}}_{\omega,c}\Leftrightarrow (\Tilde{\phi},\Tilde{\psi},\Tilde{\varphi})\in \Tilde{\mathcal{G}}_{\omega,c},\\
     (\phi,\psi,\varphi)\in a_{\omega,c}\Leftrightarrow (\Tilde{\phi},\Tilde{\psi},\Tilde{\varphi})\in \Tilde{a}_{\omega,c},
\end{align*}
and
\begin{align*}
    \mu_{\omega,c}=\Tilde{\mu}_{\omega,c}.
\end{align*}
 Now we state the main result in this section.
 \begin{theorem}\label{lemma:nonempty}
If \eqref{D:condition} holds and assume $N=5$ for the cases $(C)$ and $(E)$, then
 $$\mathcal{G}_{\omega,c}=a_{\omega,c}\neq \emptyset.$$
 In fact, this is equivalent to
 $\widetilde{\mathcal{G}}_{\omega,c}=\Tilde{a}_{\omega,c}\neq \emptyset.$
 \end{theorem}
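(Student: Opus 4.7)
The plan is to realize elements of $\widetilde{\mathcal{G}}_{\omega,c}$ as minimizers of the Nehari-constrained variational problem $\tilde{\mu}_{\omega,c} = \inf\tilde{S}_{\omega,c}|_{\widetilde{\mathcal{N}}_{\omega,c}}$, and then promote any such minimizer to a critical point of $\tilde{S}_{\omega,c}$ via a Lagrange multiplier argument. Combined with the equivalence $(\phi,\psi,\varphi)\in\mathcal{G}_{\omega,c}\Leftrightarrow(\tilde\phi,\tilde\psi,\tilde\varphi)\in\widetilde{\mathcal{G}}_{\omega,c}$ already recorded, this proves $\mathcal{G}_{\omega,c}=a_{\omega,c}\neq\emptyset$.

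First I would establish that the variational problem is well posed. Nonemptiness of $\widetilde{\mathcal{N}}_{\omega,c}$ is straightforward: for any $(u,v,w)\in\tilde{X}_{\omega,c}$ with $\tilde{V}(u,v,w)\neq 0$, a phase rotation makes $\tilde{V}>0$, and $\lambda\mapsto\tilde{S}_{\omega,c}(\lambda u,\lambda v,\lambda w)=\lambda^{2}\tilde{Q}_{\omega,c}-\lambda^{3}\tilde{V}$ has a unique positive maximizer $\lambda_{\star}=2\tilde{Q}_{\omega,c}/(3\tilde{V})$ landing on $\widetilde{\mathcal{N}}_{\omega,c}$. Positivity of $\tilde{\mu}_{\omega,c}$ rests on the trilinear estimate $|\tilde{V}(u,v,w)|\lesssim\tilde{Q}_{\omega,c}(u,v,w)^{3/2}$: in case (A) this is a Gagliardo--Nirenberg inequality, while in (B)--(E) each component that lies only in $\dot{H}^{1}$ must be estimated via the Sobolev embedding $\dot{H}^{1}\hookrightarrow L^{2N/(N-2)}$ combined with H\"older, which is exactly where the dimension conditions in \eqref{D:condition} (and the extra restriction $N=5$ in (C) and (E)) enter. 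Combined with the Nehari identity $2\tilde{Q}_{\omega,c}=3\tilde{V}$, giving $\tilde{S}_{\omega,c}=\tilde{Q}_{\omega,c}/3$ along $\widetilde{\mathcal{N}}_{\omega,c}$, this produces $\tilde{\mu}_{\omega,c}\geq c_{0}>0$ and boundedness of every minimizing sequence in $\tilde{X}_{\omega,c}$.

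Next I would extract a nontrivial weak limit by concentration--compactness. The trick is to transfer a minimizing sequence back to the original variables $(u_{n},v_{n},w_{n})=(e^{i\gamma_{1}c\cdot x/2}\tilde{u}_{n},\ldots)$, for which the equivalent functional $S_{\omega,c}$ is translation and gauge invariant; this cures the fact that in the tilde variables $\tilde{V}$ carries the oscillating phase $e^{i(\gamma_{1}+\gamma_{2}-\gamma_{3})c\cdot x/2}$ when $\gamma_{1}+\gamma_{2}\neq\gamma_{3}$. Lions' concentration--compactness principle then yields the usual vanishing/dichotomy/compactness trichotomy. Vanishing is incompatible with $V(u_{n},v_{n},w_{n})=\tfrac{2}{3}Q_{\omega,c}(u_{n},v_{n},w_{n})\geq\tfrac{2}{3}\mu_{\omega,c}>0$ together with the trilinear estimate above. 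Dichotomy is excluded by strict subadditivity: any nontrivial splitting produces two pieces that can each be rescaled onto $\widetilde{\mathcal{N}}_{\omega,c}$, and the cubic homogeneity of $\tilde{S}_{\omega,c}|_{\widetilde{\mathcal{N}}_{\omega,c}}$ forces their energies to sum strictly above $\tilde{\mu}_{\omega,c}$. The sequence is therefore tight up to translation, converges weakly to $(\phi^{\infty},\psi^{\infty},\varphi^{\infty})\neq(0,0,0)$, and a Brezis--Lieb argument on $V$ with weak lower semicontinuity of $Q_{\omega,c}$ upgrades the convergence and shows the limit attains $\mu_{\omega,c}$. Finally, the Lagrange multiplier associated with the Nehari constraint is forced to be zero by pairing the Euler--Lagrange identity with the minimizer itself and using the cubic homogeneity of $\tilde{V}$; hence any minimizer is a critical point of $\tilde{S}_{\omega,c}$. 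Since every critical point automatically sits on $\widetilde{\mathcal{N}}_{\omega,c}$ with energy at least $\tilde{\mu}_{\omega,c}$, the set of minimizers coincides with $\widetilde{\mathcal{G}}_{\omega,c}$.

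The main obstacle is ruling out vanishing and dichotomy under the combined defects of non-translation-invariance of $\tilde{S}_{\omega,c}$ and the loss of $L^{2}$ coercivity on some components in cases (B)--(E); this forces the passage between the tilde and untilde variables, and forces the nonlinear term to be controlled purely through the Sobolev embedding $\dot{H}^{1}\hookrightarrow L^{2N/(N-2)}$ on the zero-mass components. In the borderline cases (C) and (E) with $N=4$ the H\"older exponent $q=N/2$ on the remaining $H^{1}$-component degenerates to $q=2$, and one cannot extract any nontrivial information on $\|u\|_{L^{2}}$ from the estimate, so vanishing cannot be excluded; raising the dimension to $N=5$ gives $q=5/2$ strictly between $L^{2}$ and the Sobolev exponent, restoring the argument and matching exactly the restriction stated in the theorem.
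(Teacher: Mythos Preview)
Your overall architecture matches the paper's: establish $\tilde\mu_{\omega,c}>0$ via the trilinear estimate $|\tilde V|\lesssim\tilde Q_{\omega,c}^{3/2}$ (with the same case-by-case H\"older/Sobolev bookkeeping), recover compactness modulo translations, and then kill the Lagrange multiplier by pairing the Euler--Lagrange equation with the minimizer. Your identification of why $N=4$ fails in cases (C) and (E) --- the exponent $N/2$ collapses to $2$, so no intermediate $L^p$ norm is available to block vanishing --- is exactly the mechanism behind the paper's restriction.

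The one place you diverge is the compactness step. You propose the full Lions trichotomy and exclude dichotomy by strict subadditivity. The paper instead avoids dichotomy entirely: it invokes Lieb's compactness lemma (a bounded sequence in $H^1$ with $\limsup\|u_n\|_{L^p}>0$ for some $2<p<2^*$ has, after translation, a nontrivial weak limit) to produce $(u,v,w)\neq(0,0,0)$, then combines Brezis--Lieb for $\tilde V$ with the observation that $\tilde N<0\Rightarrow\tfrac13\tilde Q>\tilde\mu_{\omega,c}$ to force the remainder to zero directly. This is shorter and sidesteps the need to analyze a splitting. Your route is viable, but your justification ``the cubic homogeneity of $\tilde S_{\omega,c}|_{\widetilde{\mathcal N}_{\omega,c}}$'' is not right as stated: on the Nehari manifold $\tilde S_{\omega,c}=\tfrac13\tilde Q_{\omega,c}$, which is quadratic. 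The correct subadditivity comes from the representation $\tilde\mu_{\omega,c}=\inf_{\tilde V>0}\tfrac{4}{27}\tilde Q_{\omega,c}^{3}/\tilde V^{2}$ and requires a short case analysis (one dichotomy piece may have $\tilde V\le0$, in which case the other has $\tilde N\le0$ and already carries all of $\tilde\mu_{\omega,c}$). So your argument can be completed, but the paper's Lieb-lemma shortcut is cleaner.
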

 If this theorem is true, then we can easily obtain the Theorem \ref{Thm1}.  In order to prove the theorem \ref{lemma:nonempty}, we need the following lemmas.

 \begin{lemma}\label{lemma:a}
 If \eqref{D:condition} holds, then $\Tilde{a}_{\omega,c} \subset\widetilde{\mathcal{G}}_{\omega,c}$.
 \end{lemma}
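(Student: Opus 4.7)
The plan is to combine a Lagrange multiplier argument (showing that a constrained minimizer is automatically a free critical point) with the trivial inclusion $\tilde{\mathcal{A}}_{\omega,c} \subset \tilde{\mathcal{N}}_{\omega,c}$. Together these will force the minimum of $\tilde{S}_{\omega,c}$ over the Nehari manifold to be attained on $\tilde{\mathcal{A}}_{\omega,c}$, and to coincide with the minimal value of $\tilde{S}_{\omega,c}$ on $\tilde{\mathcal{A}}_{\omega,c}$, which is exactly the assertion $\tilde{a}_{\omega,c}\subset\tilde{\mathcal{G}}_{\omega,c}$.

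First I would note the identity
$$\tilde{N}_{\omega,c}(u,v,w)=\partial_\lambda\tilde{S}_{\omega,c}(\lambda u,\lambda v,\lambda w)\big|_{\lambda=1}=\bigl\langle \tilde{S}'_{\omega,c}(u,v,w),(u,v,w)\bigr\rangle,$$
which gives $\tilde{\mathcal{A}}_{\omega,c}\subset\tilde{\mathcal{N}}_{\omega,c}$; in particular every $(\phi_1,\psi_1,\varphi_1)\in\tilde{\mathcal{A}}_{\omega,c}$ satisfies $\tilde{S}_{\omega,c}(\phi_1,\psi_1,\varphi_1)\geq\tilde{\mu}_{\omega,c}$. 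Consequently it is enough to prove that an arbitrary minimizer $(\phi,\psi,\varphi)\in\tilde{a}_{\omega,c}$ actually belongs to $\tilde{\mathcal{A}}_{\omega,c}$.

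For this, using that $\tilde{Q}_{\omega,c}$ is $2$-homogeneous and $\tilde{V}$ is $3$-homogeneous in $(u,v,w)$, a one-line differentiation gives
$$\bigl\langle\tilde{N}'_{\omega,c}(\phi,\psi,\varphi),(\phi,\psi,\varphi)\bigr\rangle=4\tilde{Q}_{\omega,c}(\phi,\psi,\varphi)-9\tilde{V}(\phi,\psi,\varphi)=-2\tilde{Q}_{\omega,c}(\phi,\psi,\varphi),$$
the last equality following from $\tilde{N}_{\omega,c}(\phi,\psi,\varphi)=0$. In each case (A)--(E) of \eqref{D:condition} the $L^2$-coefficients appearing in $\tilde{Q}_{\omega,c}$ are nonnegative, and the kinetic part $\tfrac12 K$ is strictly positive on any nontrivial element of $\tilde{X}_{\omega,c}$; so $\tilde{Q}_{\omega,c}(\phi,\psi,\varphi)>0$. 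This both ensures that $\tilde{\mathcal{N}}_{\omega,c}$ is a smooth codimension-one manifold near $(\phi,\psi,\varphi)$, and allows the Lagrange multiplier theorem to produce $\Lambda\in\mathbb{R}$ with $\tilde{S}'_{\omega,c}(\phi,\psi,\varphi)=\Lambda\,\tilde{N}'_{\omega,c}(\phi,\psi,\varphi)$. Pairing with $(\phi,\psi,\varphi)$ yields $0=\tilde{N}_{\omega,c}(\phi,\psi,\varphi)=-2\Lambda\,\tilde{Q}_{\omega,c}(\phi,\psi,\varphi)$, hence $\Lambda=0$ and $(\phi,\psi,\varphi)\in\tilde{\mathcal{A}}_{\omega,c}$.

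The only nonroutine point I anticipate is the strict positivity of $\tilde{Q}_{\omega,c}$ in the ``zero-mass'' cases (B)--(E), where some of the $L^2$-coefficients vanish and the corresponding components are only in $\dot{H}^1$. However, the dichotomy built into the definition of $\tilde{X}_{\omega,c}$ is arranged precisely so that a vanishing coefficient always pairs with a component whose missing $L^2$ mass does not contribute to $\tilde{Q}_{\omega,c}$ anyway, while $\tfrac12 K$ still dominates from below. Once this case analysis is dispatched, the Lagrange multiplier step and the final comparison with elements of $\tilde{\mathcal{A}}_{\omega,c}$ are entirely standard.
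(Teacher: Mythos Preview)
Your argument is correct and follows essentially the same route as the paper: compute $\langle\tilde N'_{\omega,c}(\phi,\psi,\varphi),(\phi,\psi,\varphi)\rangle=-2\tilde Q_{\omega,c}(\phi,\psi,\varphi)<0$, apply the Lagrange multiplier theorem, pair with $(\phi,\psi,\varphi)$ to force the multiplier to vanish, and then use $\tilde{\mathcal A}_{\omega,c}\subset\tilde{\mathcal N}_{\omega,c}$ to conclude minimality over $\tilde{\mathcal A}_{\omega,c}$. Your additional discussion of why $\tilde Q_{\omega,c}>0$ in the zero-mass cases (B)--(E) is a point the paper leaves implicit, so your version is in fact slightly more complete.
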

 \begin{proof}
 Let $(\phi,\psi,\varphi)\in \Tilde{a}_{\omega,c}$. Since $\Tilde{N}_{\omega,c}(\phi,\psi,\varphi)=0$ and $(\phi,\psi,\varphi)\neq 0$, we have
 \begin{align}\label{exist:1}
     \left(\Tilde{N}^{\prime}_{\omega,c}(\phi,\psi,\varphi),(\phi,\psi,\varphi)\right)=4\Tilde{Q}_{\omega,c}(\phi,\psi,\varphi)-9\Tilde{V}(\phi,\psi,\varphi)=-2\Tilde{Q}_{\omega,c}(\phi,\psi,\varphi)<0.
 \end{align}
 By the Lagrange multiplier theorem, there exists $\lambda\in\mathbb{R}$ such that $\Tilde{S}^{\prime}_{\omega,c}(\phi,\psi,\varphi)=\lambda \Tilde{N}^{\prime}_{\omega,c}(\phi,\psi,\varphi)$. Moreover, we have
 \begin{align}\label{exist:2}
     \lambda\left(\Tilde{N}^{\prime}_{\omega,c}(\phi,\psi,\varphi),(\phi,\psi,\varphi)\right)=\left(\Tilde{S}^{\prime}_{\omega,c}(\phi,\psi,\varphi),(\phi,\psi,\varphi)\right)=\Tilde{N}_{\omega,c}(\phi,\psi,\varphi)=0.
 \end{align}
Combining \eqref{exist:1} and \eqref{exist:2}, we can obtain $\lambda=0$. Hence, $\Tilde{S}^{\prime}_{\omega,c}(\phi,\psi,\varphi)=0$, which implies $(\phi,\psi,\varphi)\in \Tilde{\mathcal{A}}_{\omega,c}$.

Notice that $\Tilde{\mathcal{A}}_{\omega,c}\subset \Tilde{\mathcal{N}}_{\omega,c}$ and $(\phi,\psi,\varphi)\in \Tilde{a}_{\omega,c}$, we have
\begin{align*}
    \Tilde{S}_{\omega,c}(\phi,\psi,\varphi)\leq \Tilde{S}_{\omega,c}(\phi_1,\psi_1,\varphi_1)~~\text{for all}~~(\phi_1,\psi_1,\varphi_1)\in\Tilde{\mathcal{A}}_{\omega,c},
\end{align*}
which implies $(\phi,\psi,\varphi)\in \Tilde{\mathcal{G}}_{\omega,c}$.
 \end{proof}

 \begin{lemma}\label{lemma:b}
 If \eqref{D:condition}  holds and $\Tilde{a}_{\omega,c}\neq \emptyset$, then $\widetilde{\mathcal{G}}_{\omega,c}\subset\Tilde{a}_{\omega,c}$.
 \end{lemma}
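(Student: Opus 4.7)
The plan is to establish $\widetilde{\mathcal{G}}_{\omega,c}\subset \tilde{a}_{\omega,c}$ by a short two-sided comparison, using the fact that the nonemptyness hypothesis together with Lemma \ref{lemma:a} pins down the common minimum value of $\tilde{S}_{\omega,c}$ over the two candidate minimizing sets.

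First I would take an arbitrary $(\phi,\psi,\varphi)\in\widetilde{\mathcal{G}}_{\omega,c}$. Since by definition it lies in $\widetilde{\mathcal{A}}_{\omega,c}$, the Euler--Lagrange identity $\tilde{S}'_{\omega,c}(\phi,\psi,\varphi)=0$ holds. Pairing with $(\phi,\psi,\varphi)$ itself, and recalling that
\[
\tilde{N}_{\omega,c}(\phi,\psi,\varphi)=\bigl(\tilde{S}'_{\omega,c}(\phi,\psi,\varphi),(\phi,\psi,\varphi)\bigr),
\]
one obtains $\tilde{N}_{\omega,c}(\phi,\psi,\varphi)=0$. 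Since $(\phi,\psi,\varphi)\neq(0,0,0)$, this shows $(\phi,\psi,\varphi)\in\tilde{\mathcal{N}}_{\omega,c}$, which immediately gives the lower bound $\tilde{S}_{\omega,c}(\phi,\psi,\varphi)\geq \tilde{\mu}_{\omega,c}$.

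Next I would use the hypothesis $\tilde{a}_{\omega,c}\neq\emptyset$: pick any $(\phi_0,\psi_0,\varphi_0)\in \tilde{a}_{\omega,c}$. By Lemma \ref{lemma:a}, $(\phi_0,\psi_0,\varphi_0)\in \widetilde{\mathcal{G}}_{\omega,c}\subset \widetilde{\mathcal{A}}_{\omega,c}$, and of course $\tilde{S}_{\omega,c}(\phi_0,\psi_0,\varphi_0)=\tilde{\mu}_{\omega,c}$. Applying the defining minimality property of $(\phi,\psi,\varphi)\in\widetilde{\mathcal{G}}_{\omega,c}$ at the admissible competitor $(\phi_0,\psi_0,\varphi_0)\in\widetilde{\mathcal{A}}_{\omega,c}$ yields
\[
\tilde{S}_{\omega,c}(\phi,\psi,\varphi)\leq \tilde{S}_{\omega,c}(\phi_0,\psi_0,\varphi_0)=\tilde{\mu}_{\omega,c}.
\]
Combined with the previous lower bound, this forces $\tilde{S}_{\omega,c}(\phi,\psi,\varphi)=\tilde{\mu}_{\omega,c}$, and together with $(\phi,\psi,\varphi)\in\tilde{\mathcal{N}}_{\omega,c}$ this is exactly the defining property of $\tilde{a}_{\omega,c}$.

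There is no real obstacle here: the nontrivial step is the Lagrange-multiplier argument in Lemma \ref{lemma:a} (already available), and the present lemma is simply its logical companion. The only subtlety to check is that the displacement $(\phi,\psi,\varphi)$ used in the pairing indeed belongs to the tangent-test class so that $\bigl(\tilde{S}'_{\omega,c}(\phi,\psi,\varphi),(\phi,\psi,\varphi)\bigr)$ is well-defined on $\tilde{X}_{\omega,c}$; this is straightforward in every case of \eqref{D:condition} since $\tilde{S}_{\omega,c}$ is a $C^1$ functional on $\tilde{X}_{\omega,c}$ under those assumptions.
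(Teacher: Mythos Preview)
Your proof is correct and follows essentially the same approach as the paper's: both use Lemma~\ref{lemma:a} on an element of $\tilde{a}_{\omega,c}$ to identify $\tilde{\mu}_{\omega,c}$ with the ground-state action, and both use the inclusion $\widetilde{\mathcal{G}}_{\omega,c}\subset\widetilde{\mathcal{A}}_{\omega,c}\subset\tilde{\mathcal{N}}_{\omega,c}$ (which you re-derive explicitly via the pairing identity). The paper's argument is marginally more compressed---it observes directly that two ground states share the same action---but the logical content is the same.
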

\begin{proof}
Let $(\phi,\psi,\varphi)\in \widetilde{\mathcal{G}}_{\omega,c} $. One can take $(\phi_1,\psi_1,\varphi_1)\in \Tilde{a}_{\omega,c}$, by Lemma \ref{lemma:a}, we have $(\phi_1,\psi_1,\varphi_1)\in \widetilde{\mathcal{G}}_{\omega,c}$, i.e., $\Tilde{S}_{\omega,c}(\phi_1,\psi_1,\varphi_1)=\Tilde{S}_{\omega,c}(\phi,\psi,\varphi)$. Therefore, for each $(u,v,w)\in\Tilde{\mathcal{N}}_{\omega,c}$, we obtain
\begin{align*}
    \Tilde{S}_{\omega,c}(\phi,\psi,\varphi)=\Tilde{S}_{\omega,c}(\phi_1,\psi_1,\varphi_1)\leq \Tilde{S}_{\omega,c}(u,v,w).
\end{align*}
Since $(\phi,\psi,\varphi)\in \widetilde{\mathcal{G}}_{\omega,c}\subset\Tilde{\mathcal{N}}_{\omega,c}$, we deduce that $(\phi,\psi,\varphi)\in \Tilde{a}_{\omega,c}$.
\end{proof}

\begin{lemma}\label{lemma:positive}
If \eqref{D:condition} holds, then $\Tilde{\mu}_{\omega,c}>0$.
\end{lemma}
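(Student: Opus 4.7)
The plan is to exploit the Nehari identity to reduce $\tilde{S}_{\omega,c}$ to a multiple of the quadratic form $\tilde{Q}_{\omega,c}$ on the manifold, and then close a lower bound via a Hölder--Sobolev estimate on the cubic form $\tilde{V}$. First, I would observe that on $\tilde{\mathcal{N}}_{\omega,c}$ the constraint $\tilde{N}_{\omega,c}=2\tilde{Q}_{\omega,c}-3\tilde{V}=0$ immediately yields
\[
\tilde{S}_{\omega,c}(\phi,\psi,\varphi)=\tilde{Q}_{\omega,c}(\phi,\psi,\varphi)-\tilde{V}(\phi,\psi,\varphi)=\tfrac{1}{3}\tilde{Q}_{\omega,c}(\phi,\psi,\varphi),
\]
and in particular $\tilde{V}(\phi,\psi,\varphi)=\tfrac{2}{3}\tilde{Q}_{\omega,c}(\phi,\psi,\varphi)>0$ for every nontrivial element. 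So everything reduces to showing that $\tilde{Q}_{\omega,c}$ is bounded away from zero on $\tilde{\mathcal{N}}_{\omega,c}$.

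Next I would verify, case by case in \eqref{D:condition}, that the quadratic form $\tilde{Q}_{\omega,c}$ is nonnegative on $\tilde{X}_{\omega,c}$ and equivalent to the natural (squared) norm on that space. In case (A) all three coefficients $\gamma_1\omega-\gamma_1^2|c|^2/4$, $\gamma_2\omega-\gamma_2^2|c|^2/4$, $2\gamma_3\omega-\gamma_3^2|c|^2/4$ are strictly positive by the hypothesis on $\omega$. In the degenerate cases (B)--(E), the coefficients that vanish are exactly those of the components placed in $\dot{H}^1$ (so they simply drop out of the norm), while the surviving $L^2$-coefficients are strictly positive thanks to the auxiliary inequalities in \eqref{D:condition}: for instance in (B), the constraints $\gamma_1\leq\gamma_2<\gamma_3/2$ at $\omega=\gamma_3|c|^2/8$ give $\gamma_j\omega-\gamma_j^2|c|^2/4=\gamma_j|c|^2(\gamma_3-2\gamma_j)/8>0$ for $j=1,2$, and analogously in (C), (D), (E).

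Third, I would establish a Hölder--Sobolev estimate of the form $|\tilde{V}(\phi,\psi,\varphi)|\leq C\,\tilde{Q}_{\omega,c}(\phi,\psi,\varphi)^{3/2}$. In case (A) with $1\leq N\leq 5$ this is straightforward from $H^1\hookrightarrow L^3$. For the mixed cases I would use Hölder with one component (or two, in cases (C) and (E)) evaluated in the critical exponent $L^{2N/(N-2)}$ controlled by $\dot{H}^1$ via Sobolev embedding, and the remaining $H^1$ components filling the other exponents by sub-critical embedding. The dimensional restrictions in \eqref{D:condition}, in particular $N=5$ for (C) and (E), are designed precisely to make this tri-Hölder admissible: when two components sit in $\dot{H}^1$, one takes them in $L^{2N/(N-2)}=L^{10/3}$ and puts the remaining $H^1$ component in $L^{5/2}$, which is covered by $H^1(\mathbb{R}^5)\hookrightarrow L^p$ for $2\leq p\leq 10/3$.

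Finally, on $\tilde{\mathcal{N}}_{\omega,c}$ I combine these three steps: from $\tfrac{2}{3}\tilde{Q}_{\omega,c}(\phi,\psi,\varphi)=\tilde{V}(\phi,\psi,\varphi)\leq C\,\tilde{Q}_{\omega,c}(\phi,\psi,\varphi)^{3/2}$ and $\tilde{Q}_{\omega,c}(\phi,\psi,\varphi)>0$, I obtain $\tilde{Q}_{\omega,c}(\phi,\psi,\varphi)\geq (2/(3C))^2$, and hence $\tilde{\mu}_{\omega,c}\geq \tfrac{4}{27C^2}>0$. The main obstacle I anticipate is carrying out the case analysis cleanly at step three: one has to pick the right interpolation exponents in each of (B)--(E) and verify that the restrictions on $N$ in \eqref{D:condition} are exactly what makes the estimate $|\tilde{V}|\lesssim \tilde{Q}_{\omega,c}^{3/2}$ work, with the right dependence on the (possibly mixed) $H^1/\dot{H}^1$ structure of $\tilde{X}_{\omega,c}$.
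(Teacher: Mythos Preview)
Your proposal is correct and follows essentially the same route as the paper: reduce to $\tilde S_{\omega,c}=\tfrac13\tilde Q_{\omega,c}$ on the Nehari set, prove a H\"older--Sobolev bound $\tilde V\le C\,\tilde Q_{\omega,c}^{3/2}$ case by case, and conclude a uniform lower bound. One small correction: the lemma assumes only \eqref{D:condition}, which for cases (C) and (E) allows $4\le N\le 5$, not just $N=5$; the extra hypothesis $N=5$ enters only later (Lemmas \ref{lemma:w}--\ref{lemma:strong}), so in your third step you should also cover $N=4$---the paper does this uniformly by placing the single $H^1$ component in $L^{N/2}$ (which is $L^2$ when $N=4$) and the two $\dot H^1$ components in $L^{2N/(N-2)}$.
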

\begin{proof}
By the definition of $\Tilde{S}_{\omega,c}$,  $\Tilde{Q}_{\omega,c}$ and $\Tilde{V}$, we have the following relation
\[\Tilde{S}_{\omega,c}=\frac{1}{3}\Tilde{Q}_{\omega,c}+\frac{1}{3}\Tilde{N}_{\omega,c}.\]
One can rewrite  $\Tilde{\mu}_{\omega,c}$ as
\begin{align}\label{exist:3}
    \Tilde{\mu}_{\omega,c}=\inf\left\{\frac{1}{3}\Tilde{Q}_{\omega,c},~~(\phi,\psi,\varphi)\in\Tilde{\mathcal{N}_{\omega,c}}\right\}.
\end{align}
We claim
\begin{align}\label{exist:claim}
    \Tilde{Q}_{\omega,c}(u,v,w)\leq C \Tilde{Q}_{\omega,c}(u,v,w)^{\frac{3}{2}}.
\end{align}
If claim \eqref{exist:claim} is true, then by dividing the both sides of \eqref{exist:claim} by $\Tilde{Q}_{\omega,c}>0$, we obtain the positive lower bound for $\Tilde{Q}_{\omega,c}$. Hence, combining with \eqref{exist:3}, we can obtain the desired result.

Now we prove the claim.  Notice that for $(u,v,w)\in\Tilde{N}_{\omega,c}$, we have $2\Tilde{Q}_{\omega,c}(u,v,w)=3\Tilde{V}(u,v,w)$.

{\bf Case (A).} By H\"older inequality and Sobolev embedding theorem, we have
\begin{align*}
    2\Tilde{Q}_{\omega,c}(u,v,w)=3\Tilde{V}(u,v,w)&\leq C\|u\|_{L^3}\|v\|_{L^3}\|w\|_{L^3}\\
    &\leq C \|u\|_{H^1}^{\frac{1}{2}}\|v\|_{H^1}^{\frac{1}{2}}\|w\|_{H^1}^{\frac{1}{2}}\leq C \Tilde{Q}_{\omega,c}(u,v,w)^{\frac{3}{2}}.
\end{align*}

{\bf Case (B).} Since $3\leq N\leq 5$, by H\"older inequality, we have
\begin{align*}
   2\Tilde{Q}_{\omega,c}=3\Tilde{V}(u,v,w)\leq & C\|u\|_{\frac{4N}{N+2}}\|v\|_{\frac{4N}{N+2}}\|w\|_{L^{\frac{2N}{N-2}}}\leq C\|u\|_{H^1}^{\frac{1}{2}}\|v\|_{H^1}^{\frac{1}{2}}\|\nabla w\|_{L^2} \\
   \leq&C\Tilde{Q}_{\omega,c}(u,v,w)^{\frac{3}{2}},
\end{align*}
where we used the Sobolev embedding since $\frac{4N}{N+2}\in\left[2,\frac{2N}{N-2}\right]$.

{\bf Case (C).} Notice that $4\leq N\leq 5$, we have
\begin{align*}
    2\Tilde{Q}_{\omega,c}(u,v,w)=3\Tilde{V}(u,v,w)&\leq C\|u\|_{L^{\frac{N}{2}}} \|v\|_{L^{\frac{2N}{N-2}}}\|w\|_{L^{\frac{2N}{N-2}}}\\
    &\leq C\|u\|_{H^1}^{\frac{1}{2}}\|\nabla v\|_{L^2}\| \nabla w\|_{L^2}\leq C \Tilde{Q}_{\omega,c}(u,v,w)^{\frac{3}{2}}
\end{align*}
where we used the Sobolev embedding since $\frac{N}{2}\in\left[2,\frac{2N}{N-2}\right]$.

By the similar argument as the cases $(B)$ and $(C)$, we can obtain the cases $(D)$ and $(E)$.

{\bf Case (D).} Notice that $3\leq N\leq 5$, we have
\begin{align*}
    2\Tilde{Q}_{\omega,c}(u,v,w)=3\Tilde{V}(u,v,w)&\leq C \|u\|_{L^{\frac{4N}{N+2}}}\|v\|_{L^{\frac{2N}{N-2}}}\|w\|_{L^{\frac{4N}{N+2}}}\\
    &\leq C\|u\|_{H^1}^{\frac{1}{2}}\|\nabla v\|_{L^2}\| w\|_{H^1}^{\frac{1}{2}}\leq C \Tilde{Q}_{\omega,c}(u,v,w)^{\frac{3}{2}}.
\end{align*}

{\bf Case (E).} Notice that $4\leq N\leq 5$, we have
\begin{align*}
    2\Tilde{Q}_{\omega,c}(u,v,w)=3\Tilde{V}(u,v,w)&\leq C \|u\|_{L^{\frac{2N}{N-2}}}\|v\|_{L^{\frac{2N}{N-2}}}\|w\|_{L^{\frac{N}{2}}}\\
    &\leq C\|\nabla u\|_{L^2}\|\nabla  v\|_{L^2}\| w\|_{H^1}^{\frac{1}{2}}\leq CC \Tilde{Q}_{\omega,c}(u,v,w)^{\frac{3}{2}}.
\end{align*}

Combining the above five cases, we can obtain the Claim \eqref{exist:claim}. Hence, we complete the proof of this Lemma.
\end{proof}

\begin{lemma}\label{lemma:upper}
Assume \eqref{D:condition}. If $(u,v,w)\in \Tilde{X}_{\omega,c}$ satisfies $\Tilde{N}_{\omega,c}(u,v,w)<0$, then $\frac{1}{3}\Tilde{Q}_{\omega,c}>\Tilde{\mu}_{\omega,c}$.
\end{lemma}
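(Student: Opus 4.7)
The natural approach is a scaling argument: given any $(u,v,w)\in \tilde{X}_{\omega,c}$ with $\tilde N_{\omega,c}(u,v,w)<0$, I will rescale by some $\lambda\in(0,1)$ to land on the Nehari manifold $\tilde{\mathcal N}_{\omega,c}$, and then use the identity $\tilde S_{\omega,c}=\tfrac{1}{3}\tilde Q_{\omega,c}+\tfrac{1}{3}\tilde N_{\omega,c}$ (already exploited in Lemma \ref{lemma:positive}) to convert the comparison $\tilde\mu_{\omega,c}\le \tilde S_{\omega,c}(\lambda u,\lambda v,\lambda w)$ into the target bound on $\tfrac13 \tilde Q_{\omega,c}(u,v,w)$.

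\textbf{Key steps.} First, I will observe that the hypothesis $\tilde N_{\omega,c}(u,v,w)<0$ together with $\tilde Q_{\omega,c}\ge 0$ forces $\tilde V(u,v,w)>\tfrac{2}{3}\tilde Q_{\omega,c}(u,v,w)\ge 0$, so in particular $\tilde V(u,v,w)>0$. Next, because the fiber map $\lambda\mapsto \tilde N_{\omega,c}(\lambda u,\lambda v,\lambda w)=2\lambda^{2}\tilde Q_{\omega,c}(u,v,w)-3\lambda^{3}\tilde V(u,v,w)$ vanishes at the unique positive value
\[
\lambda_{0}=\frac{2\tilde Q_{\omega,c}(u,v,w)}{3\tilde V(u,v,w)},
\]
the condition $\tilde N_{\omega,c}(u,v,w)<0$ amounts exactly to $\lambda_{0}<1$. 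So $(\lambda_{0}u,\lambda_{0}v,\lambda_{0}w)\in\tilde{\mathcal N}_{\omega,c}$ and is nonzero. Third, using $\tilde S_{\omega,c}=\tfrac13\tilde Q_{\omega,c}+\tfrac13\tilde N_{\omega,c}$ on this scaled triple yields $\tilde S_{\omega,c}(\lambda_{0}u,\lambda_{0}v,\lambda_{0}w)=\tfrac{\lambda_{0}^{2}}{3}\tilde Q_{\omega,c}(u,v,w)$. Combining with the definition of $\tilde\mu_{\omega,c}$ and $\lambda_{0}^{2}<1$ gives the strict inequality $\tilde\mu_{\omega,c}<\tfrac{1}{3}\tilde Q_{\omega,c}(u,v,w)$.

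\textbf{Main technical point.} The only subtle place is to make sure $\tilde Q_{\omega,c}(u,v,w)$ is strictly positive (so that the scaling ratio $\lambda_{0}$ is well-defined and $(\lambda_{0}u,\lambda_{0}v,\lambda_{0}w)$ is admissible), and this is the step I expect to require the case split coming from \eqref{D:condition}. In case $(A)$ all three $L^{2}$-coefficients in $\tilde Q_{\omega,c}$ are strictly positive, so $\tilde Q_{\omega,c}$ dominates the full $H^{1}\times H^{1}\times H^{1}$ norm. In cases $(B)$–$(E)$ exactly the coefficients that vanish sit in front of components taken from $\dot H^{1}$, while the remaining $L^{2}$-coefficients are strictly positive by the inequalities on $\gamma_i$ built into \eqref{D:condition}; so $\tilde Q_{\omega,c}$ still controls the ambient norm on $\tilde X_{\omega,c}$ and vanishes only at $(0,0,0)$. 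Once positivity is established in each case, the scaling argument above goes through uniformly and completes the proof.
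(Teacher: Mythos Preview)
Your proposal is correct and follows essentially the same route as the paper: define $\lambda_{0}=\dfrac{2\tilde Q_{\omega,c}(u,v,w)}{3\tilde V(u,v,w)}\in(0,1)$, observe that the rescaled triple lies on $\tilde{\mathcal N}_{\omega,c}$, and use the identity $\tilde S_{\omega,c}=\tfrac13\tilde Q_{\omega,c}+\tfrac13\tilde N_{\omega,c}$ (equivalently, the reformulation \eqref{exist:3}) to obtain $\tilde\mu_{\omega,c}\le \tfrac{\lambda_{0}^{2}}{3}\tilde Q_{\omega,c}(u,v,w)<\tfrac13\tilde Q_{\omega,c}(u,v,w)$. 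The paper simply asserts $2\tilde Q_{\omega,c}(u,v,w)>0$ without comment, whereas you spell out (correctly) that under \eqref{D:condition} the quadratic form $\tilde Q_{\omega,c}$ is coercive on $\tilde X_{\omega,c}$ and hence vanishes only at the origin; this extra detail is sound but not a different argument.
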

\begin{proof}
If $\Tilde{N}_{\omega,c}(u,v,w)<0$, then by the definition of $\Tilde{N}_{\omega,c}$, we have $3\Tilde{V}(u,v,w)>2\Tilde{Q}_{\omega,c}(u,v,w)>0$. From this, we have
\begin{align*}
    \lambda_0:=\frac{2\Tilde{Q}_{\omega,c}(u,v,w)}{3\Tilde{V}(u,v,w)}\in(0,1)
\end{align*}
and $\Tilde{N}_{\omega,c}(\lambda_0u,\lambda_0v,\lambda_0w)=0$. By \eqref{exist:3}, we deduce
\begin{align*}
    \Tilde{\mu}_{\omega,c}\leq\frac{1}{3}\Tilde{Q}_{\omega,c}(\lambda_0u,\lambda_0v,\lambda_0w)=\frac{\lambda_0^2}{3}\Tilde{Q}_{\omega,c}(u,v,w)<\frac{1}{3}\Tilde{Q}_{\omega,c}(u,v,w).
\end{align*}
Now we complete the proof this Lemma.
\end{proof}

\begin{lemma}\label{lemma:nonlinear}
Assume \eqref{D:condition}. If the sequence $\{(u_n,v_n,w_n)\}$ weakly converges to $(u,v,w)\in\Tilde{X}_{\omega,c}$, then
\begin{align*}
    \Tilde{V}(u_n,v_n,w_n)-\Tilde{V}(u_n-u,v_n-v,w_n-w)\to \Tilde{V}(u,v,w)~~\text{as}~~n\to\infty.
\end{align*}
\end{lemma}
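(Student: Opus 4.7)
The claim is a Brezis--Lieb type identity for the trilinear oscillatory functional $\Tilde V$. Write $\phi_n:=u_n-u$, $\psi_n:=v_n-v$, $\eta_n:=w_n-w$, so that $(\phi_n,\psi_n,\eta_n)\rightharpoonup(0,0,0)$ in the relevant product space listed in $\Tilde X_{\omega,c}$. Set $\alpha:=\tfrac{\gamma_1+\gamma_2-\gamma_3}{2}$ and $\chi(x):=e^{i\alpha c\cdot x}$. Expand the pointwise identity
\begin{align*}
u_n v_n\overline{w_n}-\phi_n\psi_n\overline{\eta_n}-uv\overline{w}
= \phi_n\psi_n\overline{w}+\phi_n v\,\overline{\eta_n}+u\psi_n\overline{\eta_n}
+\phi_n v\overline{w}+u\psi_n\overline{w}+uv\overline{\eta_n}.
\end{align*}
Multiplying by $\chi$, taking real parts and integrating, it suffices to show that each of the six terms on the right has integral against $\chi$ tending to zero as $n\to\infty$.

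For the three \emph{linear-in-the-sequence} terms, such as $\int\chi\,\phi_n v\overline{w}\,dx$, observe that $vw\chi$ is a fixed element of $L^q(\RN)$ for the exponent $q$ dual to the space in which $\phi_n$ is bounded (in case $(A)$ this is just $L^2$; in cases $(B)$--$(E)$ one uses H\"older together with the Sobolev embedding $\dot H^1\hookrightarrow L^{2N/(N-2)}$ for the components sitting in $\dot H^1$, exactly as in the proof of Lemma \ref{lemma:positive}, to put $vw\chi$ into the dual of the space containing $\phi_n$). Since $\phi_n\rightharpoonup 0$ in that space, the integral vanishes in the limit; the other two linear terms are analogous.

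For the three \emph{bilinear-in-the-sequence} terms, say $\int\chi\,\phi_n\psi_n\overline{w}\,dx$, the argument is the standard density-plus-local-compactness trick. Fix $\varepsilon>0$ and approximate $w$ in the appropriate Lebesgue norm by $w_\varepsilon\in C^\infty_c(\RN)$ with $\|w-w_\varepsilon\|\le\varepsilon$; the corresponding error is bounded by $C\varepsilon$ uniformly in $n$, again by H\"older and Sobolev as above. For the truncated piece $\int\chi\,\phi_n\psi_n\overline{w_\varepsilon}\,dx$, use that $w_\varepsilon$ is supported in a fixed ball $B_R$, together with the Rellich--Kondrachov theorem applied to $\phi_n\rightharpoonup 0$ in $H^1(B_R)$ (or $\dot H^1(B_R)$), to obtain strong convergence $\phi_n\to 0$ in $L^r(B_R)$ for every $r<2N/(N-2)$; pairing this with the $L^{r'}$-boundedness of $\psi_n$ (from weak convergence in the relevant Sobolev space plus Sobolev embedding) gives that the truncated integral tends to $0$. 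Letting $n\to\infty$ first and then $\varepsilon\to 0$ closes the estimate. The other two bilinear terms are handled identically, rotating the role of the ``fixed'' factor among $u,v,w$.

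The main obstacle is bookkeeping: in cases $(B)$--$(E)$ some components live only in $\dot H^1$ rather than $H^1$, so one must verify in each case that the H\"older triple of exponents used for the trilinear term sits inside the Sobolev range $[2,2N/(N-2)]$ for the corresponding component; this is precisely the verification already carried out case-by-case in Lemma \ref{lemma:positive}, and the same exponent choices go through here to legitimize both the density approximation of the fixed factor and the dual pairing for the linear terms.
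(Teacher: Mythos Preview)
Your proof is correct and follows essentially the same strategy as the paper's: expand the trilinear difference algebraically and show each cross-term vanishes via weak convergence combined with the Sobolev/H\"older exponent choices already worked out in Lemma~\ref{lemma:positive}. The only cosmetic difference is that the paper keeps the original sequences $u_n,v_n,w_n$ and packages the compactness step as the weak convergence of products $u_nv_n\rightharpoonup uv$, $u_n\bar w_n\rightharpoonup u\bar w$, $v_n\bar w_n\rightharpoonup v\bar w$ in the appropriate Lebesgue spaces, whereas you shift to $\phi_n=u_n-u$, $\psi_n=v_n-v$, $\eta_n=w_n-w$ and make the underlying density-plus-Rellich argument explicit; these are two equivalent ways of organizing the same analysis.
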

\begin{proof}
By the direct calculation, we have
\begin{align*}
    &\Tilde{V}(u_n,v_n,w_n)-\Tilde{V}(u_n-u,v_n-v,w_n-w)- \Tilde{V}(u,v,w)\\
    =&\Re\int e^{i\left(\frac{\gamma_1+\gamma_2}{2}-\frac{\gamma_3}{2}\right)c\cdot x}\left((u_nv+uv_n-uv)\bar{w}_n+(u_nv_n-u_nv-uv_n)\bar{w}\right)dx\\
    =&\Re\int e^{i\left(\frac{\gamma_1+\gamma_2}{2}-\frac{\gamma_3}{2}\right)}(u_nv\bar{w}_n+uv_n\bar{w}_n-uv\bar{w}_n+u_nv_n\bar{w}-u_nv\bar{w}-uv_n\bar{w}).
\end{align*}
We aim to prove the right-hand side  vanishes as $n\to\infty$.

{\bf Case (A).}  From the Sobolev embedding, we have, as $n\to\infty$,
\begin{align*}
    &(u_n,v_n,w_n)\rightharpoonup (u,v,w)~~\text{in}~~L^3(\mathbb{R}^N)\times L^3(\mathbb{R}^N)\times L^3(\mathbb{R}^N),\\
    &u_nv_n\rightharpoonup uv~~\text{in}~~L^\frac{3}{2}(\mathbb{R}^N),~~u_n\bar{w}_n\rightharpoonup u\bar{w},~~\text{in}~~L^\frac{3}{2}(\mathbb{R}^N),~~v_n\bar{w}_n\rightharpoonup v\bar{w},~~\text{in}~~L^\frac{3}{2}(\mathbb{R}^N).
\end{align*}

{\bf Case (B).} As $n\to\infty$,
\begin{align*}
    &(u_n,v_n,w_n)\rightharpoonup (u,v,w)~~\text{in}~~L^{\frac{4N}{N+2}}(\mathbb{R}^N)\times L^{\frac{4N}{N+2}}(\mathbb{R}^N)\times L^{\frac{2N}{N-2}}(\mathbb{R}^N),\\
    &u_nv_n\rightharpoonup uv~~\text{in}~~L^\frac{2N}{N+2}(\mathbb{R}^N),~~u_n\bar{w}_n\rightharpoonup u\bar{w},~~\text{in}~~L^\frac{4N}{3N-2}(\mathbb{R}^N),~~v_n\bar{w}_n\rightharpoonup v\bar{w},~~\text{in}~~L^\frac{4N}{3N-2}(\mathbb{R}^N).
\end{align*}

{\bf Case (C).} As $n\to\infty$,
\begin{align*}
    &(u_n,v_n,w_n)\rightharpoonup (u,v,w)~~\text{in}~~L^{\frac{N}{2}}(\mathbb{R}^N)\times L^{\frac{2N}{N-2}}(\mathbb{R}^N)\times L^{\frac{2N}{N-2}}(\mathbb{R}^N),\\
    &u_nv_n\rightharpoonup uv~~\text{in}~~L^\frac{2N}{N+2}(\mathbb{R}^N),~~u_n\bar{w}_n\rightharpoonup u\bar{w},~~\text{in}~~L^\frac{2N}{N+2}(\mathbb{R}^N),~~v_n\bar{w}_n\rightharpoonup v\bar{w},~~\text{in}~~L^\frac{N}{N-2}(\mathbb{R}^N).
\end{align*}

{\bf Case (D).} As $n\to\infty$,
\begin{align*}
    &(u_n,v_n,w_n)\rightharpoonup (u,v,w)~~\text{in}~~L^{\frac{4N}{N+2}}(\mathbb{R}^N)\times L^{\frac{2N}{N-2}}(\mathbb{R}^N)\times L^{\frac{4N}{N+2}}(\mathbb{R}^N),\\
    &u_nv_n\rightharpoonup uv~~\text{in}~~L^\frac{4N}{3N-2}(\mathbb{R}^N),~~u_n\bar{w}_n\rightharpoonup u\bar{w},~~\text{in}~~L^\frac{2N}{N+2}(\mathbb{R}^N),~~v_n\bar{w}_n\rightharpoonup v\bar{w},~~\text{in}~~L^\frac{4N}{3N-2}(\mathbb{R}^N).
\end{align*}

{\bf Case (E).} As $n\to\infty$,
\begin{align*}
    &(u_n,v_n,w_n)\rightharpoonup (u,v,w)~~\text{in}~~ L^{\frac{2N}{N-2}}(\mathbb{R}^N)\times L^{\frac{2N}{N-2}}(\mathbb{R}^N)\times L^{\frac{N}{2}}(\mathbb{R}^N),\\
    &u_nv_n\rightharpoonup uv~~\text{in}~~L^\frac{N}{N-2}(\mathbb{R}^N),~~u_n\bar{w}_n\rightharpoonup u\bar{w},~~\text{in}~~L^\frac{2N}{N+2}(\mathbb{R}^N),~~v_n\bar{w}_n\rightharpoonup v\bar{w},~~\text{in}~~L^\frac{2N}{N+2}(\mathbb{R}^N).
\end{align*}

Then by the H\"older inequality and above weak convergences, we can easily obtain the desired result.
\end{proof}

Let
\begin{align*}
    \Tilde{\tau}_{y}(u,v,w):=\left(e^{-\frac{i\gamma_1}{2}c\cdot x}u(\cdot-y),e^{-\frac{i\gamma_2}{2}c\cdot x}v(\cdot-y),e^{-i\frac{\gamma_3}{2}c\cdot x}w(\cdot-y)\right),
\end{align*}
then we have
\[\Tilde{Q}_{\omega,c}( \Tilde{\tau}_{y}(u,v,w))=\Tilde{Q}_{\omega,c}(u,v,w),~~\Tilde{V}( \Tilde{\tau}_{y}(u,v,w))=\Tilde{V}(u,v,w),\]
for all $y\in\mathbb{R}^N$.

\begin{lemma}\label{lemma:w}
Assume \eqref{D:condition} holds. Further more, we also assume that $N=5$ for the cases $(C)$ and $(E)$. If a sequence $\{(u_n,v_n,w_n)\}$ in $\Tilde{X}_{\omega,c}$ satisfies
\begin{align*}
    \Tilde{Q}_{\omega,c}(u_n,v_n,w_n)\to A_1,~~\Re\int e^{i\left(\frac{\gamma_1+\gamma_2}{3}-\frac{\gamma_3}{2}\right)c\cdot x}u_nv_n\bar{w}_n\to A_2~~\text{as}~~n\to\infty,
\end{align*}
for some positive constants $A_1,A_2>0$, then there exist $\{y_n\}$ and $(u,v,w)\in\Tilde{X}_{\omega,c}\setminus \{(0,0,0)\}$ such that $\{\Tilde{\tau}_{y}(u_n,v_n,w_n)\}$ has a subsequence that weakly converges to $(u,v,w)$ in $\Tilde{X}_{\omega,c}$.
\end{lemma}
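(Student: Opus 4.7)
The plan is a Lions-type concentration-compactness dichotomy. The first step is boundedness of $(u_n,v_n,w_n)$ in $\Tilde{X}_{\omega,c}$: under \eqref{D:condition} the coefficients of the $L^2$-norms in $\Tilde{Q}_{\omega,c}$ are non-negative, and vanish exactly for those components that are only required to lie in $\dot{H}^1$. Consequently $\Tilde{Q}_{\omega,c}$ is equivalent to the squared norm on $\Tilde{X}_{\omega,c}$, and the convergence $\Tilde{Q}_{\omega,c}(u_n,v_n,w_n)\to A_1$ yields a uniform bound in $\Tilde{X}_{\omega,c}$.

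Next I would exclude the vanishing alternative. Assume towards contradiction that
\[
\sup_{y\in\mathbb{R}^N}\int_{B(y,1)}\bigl(|u_n|^{p_u}+|v_n|^{p_v}+|w_n|^{p_w}\bigr)\,dx\ \longrightarrow\ 0,
\]
where $(p_u,p_v,p_w)$ is the H\"older triple used for the corresponding case in the proof of Lemma~\ref{lemma:positive}. In each of (A)--(E) the exponents attached to the $H^1$-components lie strictly between $2$ and the Sobolev critical exponent $2^{\ast}$, so the classical Lions vanishing lemma forces $u_n\to 0$ in $L^{p_u}(\mathbb{R}^N)$ (and similarly for the other $H^1$-components), while the $\dot{H}^1$-components stay bounded in $L^{2^{\ast}}$ by Sobolev. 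Feeding this into the same H\"older chain of Lemma~\ref{lemma:positive} gives $\Tilde{V}(u_n,v_n,w_n)\to 0$, contradicting the hypothesis $\Tilde{V}(u_n,v_n,w_n)\to A_2>0$.

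Non-vanishing therefore yields $\delta>0$ and $y_n\in\mathbb{R}^N$ such that $\int_{B(y_n,1)}(|u_n|^{p_u}+|v_n|^{p_v}+|w_n|^{p_w})\,dx\ge \delta$. Applying $\Tilde{\tau}_{y_n}$, which preserves both $\Tilde{Q}_{\omega,c}$ and $\Tilde{V}$, I would extract a subsequence converging weakly in $\Tilde{X}_{\omega,c}$ to some $(u,v,w)$. Rellich--Kondrachov on $B(0,1)$ upgrades the weak convergence to strong convergence of the relevant components in the $L^{p_{\ast}}(B(0,1))$ spaces, so the lower bound $\delta$ passes to the limit and $(u,v,w)\neq(0,0,0)$.

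The main obstacle, and the origin of the extra hypothesis $N=5$ in cases (C) and (E), is that in those cases two components are controlled only in $\dot{H}^1$ and the H\"older exponent for the single $H^1$-component equals $N/2$. Lions' vanishing lemma turns local $L^{N/2}$-smallness of an $H^1$-bounded sequence into global $L^{N/2}$-convergence only when $N/2\in(2,2^{\ast})$, i.e.\ $4<N<6$; together with $N\le 5$ from \eqref{D:condition} this forces $N=5$. For $N=4$ the exponent $N/2=2$ is the endpoint of Lions' lemma and the scheme breaks down, since a sequence may keep a fixed fraction of its $L^2$-mass in vanishingly small balls without preventing $\Tilde{V}$ from staying away from zero --- precisely the compactness failure flagged in the remark after Theorem~\ref{Thm1}.
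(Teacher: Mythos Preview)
Your proposal is correct and follows essentially the same approach as the paper: boundedness in $\Tilde{X}_{\omega,c}$ from $A_1$, then $A_2>0$ forces a non-trivial $L^p$ lower bound for the relevant $H^1$-component(s) with $p\in(2,2^{\ast})$, and a concentration--compactness lemma produces the nonzero weak limit after translation. The only cosmetic difference is that the paper invokes Lieb's lemma \cite[Lemma~6]{L1983Invent} directly from $\limsup_n\|f_n\|_{L^p}>0$, whereas you phrase the same step through Lions' vanishing dichotomy; your explanation of the $N=5$ restriction in cases (C) and (E) matches the paper's reasoning exactly.
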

\begin{proof}
Since $\Tilde{Q}_{\omega,c}(u_n,v_n,w_n)\to A_1$, we deduce that the sequence $\{(u_n,v_n,w_n)\}$ is bounded in $\Tilde{X}_{\omega}$. Since $\Re\int e^{i\left(\frac{\gamma_1+\gamma_2}{2}-\frac{\gamma_3}{2}\right)c\cdot x}u_nv_n\bar{w}_n\to A_2$, we obtain
\begin{align*}
    &\textbf{Case (A)}~~\limsup_{n\to\infty}\|u_n\|_{L^3}>0,~~\limsup_{n\to\infty}\|v_n\|_{L^3}>0,~~\limsup_{n\to\infty}\|w_n\|_{L^3}>0,\\
    &\textbf{Case (B)}~~\limsup_{n\to\infty}\|u_n\|_{L^{\frac{4N}{N+2}}}>0,~~\limsup_{n\to\infty}\|v_n\|_{L^{\frac{4N}{N+2}}}>0,\\
    &\textbf{Case (C)}~~\limsup_{n\to\infty}\|uu_n\|_{L^{\frac{N}{2}}}>0,\\
    &\textbf{Case (D)}~~\limsup_{n\to\infty}\|u_n\|_{L^{\frac{4N}{N+2}}}>0,~~\limsup_{n\to\infty}\|w_n\|_{L^{\frac{4N}{N+2}}}>0,\\
    &\textbf{Case (E)}~~\limsup_{n\to\infty}\|w_n\|_{L^{\frac{N}{2}}}>0.
\end{align*}
Hence, by \cite[Lemma 6]{L1983Invent}, we can obtain the desired result.
\end{proof}

\begin{lemma}\label{lemma:strong}
Assume \eqref{D:condition}. Further more, we also assume that $N=5$ for the cases $(C)$ and $(E)$. If a sequence  $\{(u_n,v_n,w_n)\}$ in $\Tilde{X}_{\omega,c}$ satisfies
\begin{align*}
    \Tilde{N}_{\omega,c}(u_n,v_n,w_n)\to0,~~\Tilde{S}_{\omega,c}(u_n,v_n,w_n)\to\Tilde{\mu}_{\omega,c}~~\text{as}~~n\to\infty,
\end{align*}
then there exist $\{y_n\}$ and $(u,v,w)\in\Tilde{X}_{\omega,c}\setminus\{(0,0,0)\}$ such that $\{\Tilde{\tau}(u_n,v_n,w_n)\}$ has a subsequence that converges to $(u,v,w)$ in $\Tilde{X}_{\omega,c}$. In particular, $(u,v,w)\in\Tilde{a}_{\omega,c}$.
\end{lemma}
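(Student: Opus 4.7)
The plan is to extract a nonzero weak limit along a translated subsequence via Lemma \ref{lemma:w}, and then promote the weak convergence to strong convergence by a Brezis--Lieb splitting combined with the sharp Nehari dichotomy from Lemma \ref{lemma:upper}.

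\textbf{Step 1 (Extracting a nonzero weak limit).} Using the identity $\tilde{S}_{\omega,c}=\tfrac{1}{3}\tilde{Q}_{\omega,c}+\tfrac{1}{3}\tilde{N}_{\omega,c}$ together with the hypotheses, I obtain $\tilde{Q}_{\omega,c}(u_n,v_n,w_n)\to 3\tilde{\mu}_{\omega,c}$, and since $\tilde{N}_{\omega,c}=2\tilde{Q}_{\omega,c}-3\tilde{V}$, also $\tilde{V}(u_n,v_n,w_n)\to 2\tilde{\mu}_{\omega,c}$. By Lemma \ref{lemma:positive} both limits are strictly positive. Lemma \ref{lemma:w} then supplies $\{y_n\}\subset\mathbb{R}^N$ and a nonzero $(u,v,w)\in\tilde{X}_{\omega,c}$ such that, along a subsequence, $(\tilde{u}_n,\tilde{v}_n,\tilde{w}_n):=\tilde{\tau}_{y_n}(u_n,v_n,w_n)\rightharpoonup (u,v,w)$ in $\tilde{X}_{\omega,c}$. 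By translation invariance of $\tilde{Q}_{\omega,c}$ and $\tilde{V}$, the translated sequence still satisfies $\tilde{Q}_{\omega,c}(\tilde{u}_n,\tilde{v}_n,\tilde{w}_n)\to 3\tilde{\mu}_{\omega,c}$ and $\tilde{N}_{\omega,c}(\tilde{u}_n,\tilde{v}_n,\tilde{w}_n)\to 0$.

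\textbf{Step 2 (Brezis--Lieb splitting).} Set $(u_n',v_n',w_n'):=(\tilde{u}_n-u,\tilde{v}_n-v,\tilde{w}_n-w)$. Weak convergence and the polarization identity for the quadratic form yield
\[\tilde{Q}_{\omega,c}(\tilde{u}_n,\tilde{v}_n,\tilde{w}_n)=\tilde{Q}_{\omega,c}(u,v,w)+\tilde{Q}_{\omega,c}(u_n',v_n',w_n')+o(1),\]
while Lemma \ref{lemma:nonlinear} provides the analogous splitting for $\tilde{V}$. Combining these two gives the Nehari splitting $\tilde{N}_{\omega,c}(u,v,w)+\tilde{N}_{\omega,c}(u_n',v_n',w_n')=o(1)$.

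\textbf{Step 3 (Nehari constraint for the limit).} I rule out $\tilde{N}_{\omega,c}(u,v,w)\ne 0$. If $\tilde{N}_{\omega,c}(u,v,w)<0$, Lemma \ref{lemma:upper} would force $\tfrac{1}{3}\tilde{Q}_{\omega,c}(u,v,w)>\tilde{\mu}_{\omega,c}$, contradicting weak lower semicontinuity $\tilde{Q}_{\omega,c}(u,v,w)\le 3\tilde{\mu}_{\omega,c}$. If $\tilde{N}_{\omega,c}(u,v,w)>0$, then $\tilde{N}_{\omega,c}(u_n',v_n',w_n')<0$ for large $n$, so Lemma \ref{lemma:upper} gives $\tilde{Q}_{\omega,c}(u_n',v_n',w_n')>3\tilde{\mu}_{\omega,c}$ eventually; inserting this into the splitting identity and passing to the limit forces $\tilde{Q}_{\omega,c}(u,v,w)\le 0$, which contradicts the strict positivity of $\tilde{Q}_{\omega,c}$ on nonzero elements of $\tilde{X}_{\omega,c}$ (the gradient terms are always coercive, and in cases where an $L^2$ coefficient vanishes a function with zero gradient must be trivial). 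Therefore $\tilde{N}_{\omega,c}(u,v,w)=0$, so $(u,v,w)\in\tilde{\mathcal{N}}_{\omega,c}$ and $\tfrac{1}{3}\tilde{Q}_{\omega,c}(u,v,w)\ge\tilde{\mu}_{\omega,c}$.

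\textbf{Step 4 (Strong convergence and minimality).} Combined with the lower semicontinuity bound $\tilde{Q}_{\omega,c}(u,v,w)\le 3\tilde{\mu}_{\omega,c}$, equality holds, so the splitting identity forces $\tilde{Q}_{\omega,c}(u_n',v_n',w_n')\to 0$. Since in each of the five cases of \eqref{D:condition} the quadratic form $\tilde{Q}_{\omega,c}$ is equivalent to the natural norm of $\tilde{X}_{\omega,c}$, this gives strong convergence $(\tilde{u}_n,\tilde{v}_n,\tilde{w}_n)\to(u,v,w)$ in $\tilde{X}_{\omega,c}$, and $\tilde{S}_{\omega,c}(u,v,w)=\tilde{\mu}_{\omega,c}$, hence $(u,v,w)\in\tilde{a}_{\omega,c}$.

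The main obstacle is Step~3, where both strict signs of $\tilde{N}_{\omega,c}(u,v,w)$ must be excluded. The positive-sign subcase is the more delicate one: ruling it out requires the strict positivity of $\tilde{Q}_{\omega,c}$ on nonzero elements of $\tilde{X}_{\omega,c}$, which is exactly where the dimensional restrictions in \eqref{D:condition} (and the additional assumption $N=5$ in cases (C) and (E)) are used in order that Lemma \ref{lemma:w} and Lemma \ref{lemma:nonlinear} apply simultaneously across all components.
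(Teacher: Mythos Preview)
Your proof is correct and follows essentially the same approach as the paper: extract a nonzero weak limit via Lemma~\ref{lemma:w}, use the Brezis--Lieb type splitting for $\tilde{Q}_{\omega,c}$ and $\tilde{V}$ (Lemma~\ref{lemma:nonlinear}), and combine Lemma~\ref{lemma:upper} with weak lower semicontinuity to force $\tilde{Q}_{\omega,c}(u,v,w)=3\tilde{\mu}_{\omega,c}$ and hence strong convergence. The only organizational difference is that in Step~3 you perform a sign dichotomy on $\tilde{N}_{\omega,c}(u,v,w)$, ruling out each strict sign separately, whereas the paper argues more directly: since $\tilde{Q}_{\omega,c}(u,v,w)>0$, the splitting gives $\tfrac{1}{3}\tilde{Q}_{\omega,c}$ of the remainder strictly below $\tilde{\mu}_{\omega,c}$ eventually, so the contrapositive of Lemma~\ref{lemma:upper} forces $\tilde{N}_{\omega,c}$ of the remainder to be nonnegative, whence $\tilde{N}_{\omega,c}(u,v,w)\le 0$, and then Lemma~\ref{lemma:upper} together with lower semicontinuity pins down $\tilde{Q}_{\omega,c}(u,v,w)=3\tilde{\mu}_{\omega,c}$. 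Both routes are equivalent and use exactly the same ingredients.
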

\begin{proof}
By assumptions, we have
\begin{align*}
    \frac{1}{3}\Tilde{Q}_{\omega,c}(u_n,v_n,w_n)=\Tilde{S}_{\omega,c}(u_n,v_n,w_n)-\frac{1}{3}\Tilde{V}(u_n,v_n,w_n)\Tilde{\mu}_{\omega,c},\\
    \frac{1}{2}\Tilde{V}(u_n,v_n,w_n)=\Tilde{S}_{\omega,c}(u_n,v_n,w_n)-\frac{1}{2}\Tilde{N}_{\omega,c}(u_n,v_n,w_n)\to \Tilde{\mu}_{\omega,c}.
\end{align*}
By Lemma \ref{lemma:positive} and Lemma \ref{lemma:w}, there exist $\{y_n\}$, $(u,v,w)\in \Tilde{X}_{\omega,c}\setminus\{(0,0,0)\}$, and a subsequence of $\Tilde{\tau}_{y_n}(u_n,v_n,w_n)$ (still denoted by $\Tilde{\tau}_{y_n}(u_n,v_n,w_n)$) such that $\Tilde{\tau}_{y_n}(u_n,v_n,w_n)\rightharpoonup (u,v,w)$ weakly in $\Tilde{X}_{\omega,c}$.

By the weakly convergence of $\Tilde{\tau}_{y_n}(u_n,v_n,w_n)$ and Lemma \ref{lemma:nonlinear}, we have
\begin{align}\label{conv:1}
    \Tilde{Q}_{\omega,c}(\Tilde{\tau}_{y_n}(u_n,v_n,w_n))-\Tilde{Q}_{\omega,c}\left(\Tilde{\tau}_{y_n}(u_n,v_n,w_n)-(u,v,w)\right)\to\Tilde{Q}_{\omega,c}(u,v,w),\\\label{conv:2}
    \Tilde{N}_{\omega,c}(\Tilde{\tau}_{y_n}(u_n,v_n,w_n))-\Tilde{N}_{\omega,c}\left(\Tilde{\tau}_{y_n}(u_n,v_n,w_n)-(u,v,w)\right)\to\Tilde{N}_{\omega,c}(u,v,w).
\end{align}
From \eqref{conv:1} and $\Tilde{Q}_{\omega,c}(u,v,w)>0$, we obtain that, up to subsequence,
\begin{align*}
    \frac{1}{3}\lim_{n\to\infty}\Tilde{Q}_{\omega,c}\left(\Tilde{\tau}_{y_n}(u_n,v_n,w_n)-(u,v,w)\right)<\frac{1}{3}\lim_{n\to\infty}\Tilde{Q}_{\omega,c}(\Tilde{\tau}_{y_n}(u_n,v_n,w_n))=\Tilde{\mu}_{\omega,c}.
\end{align*}
From this and Lemma \ref{lemma:upper}, we obtain $\Tilde{N}_{\omega,c}\left(\Tilde{\tau}_{y_n}(u_n,v_n,w_n)-(u,v,w)\right)>0$ for $n$ large enough. Therefore, from \eqref{conv:2}, $\Tilde{N}_{\omega,c}(u,v,w)\leq0$ since $\Tilde{N}_{\omega,c}(\Tilde{\tau}_{y_n}(u_n,v_n,w_n))\to0$. Again, by Lemma \ref{lemma:upper} and the weakly semi-continuity of norms, we have
\begin{align*}
    \Tilde{\mu}_{\omega,c}\leq\frac{1}{3}\Tilde{Q}_{\omega,c}(u,v,w)\leq\frac{1}{3}\Tilde{Q}_{\omega,c}\left(\Tilde{\tau}_{y_n}(u_n,v_n,w_n)\right)=\Tilde{\mu}_{\omega,c}.
\end{align*}
Therefore, by \eqref{conv:1}, we deduce $\Tilde{Q}_{\omega,c}\left(\Tilde{\tau}_{y_n}(u_n,v_n,w_n)-(u,v,w)\right)\to0$ as $n\to\infty$, which implies that $\Tilde{\tau}_{y_n}(u_n,v_n,w_n)\to(u,v,w)$ strongly in $\Tilde{X}_{\omega,c}$. Now we  complete the proof of this Lemma.
\end{proof}

\begin{proof}[\bf Proof of Theorem \ref{lemma:nonempty}.]
Combining the Lemmas \ref{lemma:a}, \ref{lemma:b} and \ref{lemma:strong}, we can get Theorem \ref{lemma:nonempty}.
\end{proof}

\section{Nonexistence result}
In this section, we show that nonexistence of non-trivial solutions for \eqref{equ:s} with $\gamma_1+\gamma_2=\gamma_2$ and $\omega=\frac{\gamma_1|c|^2}{4}=\frac{\gamma_2|c|^2}{4}$.

\begin{proof}[\bf Proof of Theorem \ref{Thm2} ]
Let $(\Tilde{\phi},\Tilde{\psi},\Tilde{\varphi})=\left(e^{-\frac{i}{2}c\cdot x}\phi,e^{-\frac{i}{2}c\cdot x}\psi,e^{-\frac{i\gamma_3}{2}c\cdot x}\varphi\right)$. By $\gamma_1+\gamma_2=\gamma_2$ and $\omega=\frac{\gamma_1|c|^2}{4}=\frac{\gamma_2|c|^2}{4}$, then $(\Tilde{\phi},\Tilde{\psi},\Tilde{\varphi})$ is a solution of the following system
\begin{align*}
    \begin{cases}
    -\Delta\Tilde{\phi}-\Tilde{\varphi}\bar{\Tilde{\psi}} =0,\\
    -\Delta\Tilde{\psi}-\Tilde{\varphi}\bar{\Tilde{\phi}} =0,\\
    -\Delta\Tilde{\varphi}-\Tilde{\phi}\Tilde{\psi}=0.\\
    \end{cases}
\end{align*}
This is equivalent to $E^{\prime}(\Tilde{\phi},\Tilde{\psi},\Tilde{\varphi})=0$. Therefore, we have
\begin{align}\label{non:1}
    0=\left(E^\prime(\Tilde{\phi},\Tilde{\psi},\Tilde{\varphi}),(\Tilde{\phi},\Tilde{\psi},\Tilde{\varphi})\right)=K(\phi,\psi,\varphi)-3\Re\int \Tilde{\phi}\Tilde{\psi}\bar{\Tilde{\varphi}}dx.
\end{align}
Let $u_{\lambda}(x)=\lambda^{\frac{N}{2}}u(\lambda x)$ for $\lambda>0$, and we have
\begin{align}\label{non:2}
    0=&\left(E^\prime(\Tilde{\phi},\Tilde{\psi},\Tilde{\varphi}),\partial_{\lambda}(\Tilde{\phi}_{\lambda},\Tilde{\psi}_{\lambda},\Tilde{\varphi}_{\lambda})|_{\lambda=1}\right)=\partial_{\lambda}E^{\prime}(\Tilde{\phi}_{\lambda},\Tilde{\psi}_{\lambda},\Tilde{\varphi}_{\lambda})|_{\lambda=1}\notag\\
    =&\partial_{\lambda}\left(\frac{\lambda^2}{2}K(\Tilde{\phi},\Tilde{\psi},\Tilde{\varphi})-\lambda^{\frac{N}{2}}\Re\int \Tilde{\phi}\Tilde{\psi}\bar{\Tilde{\varphi}}\right)\big|_{\lambda=1}\notag\\
    =&K(\Tilde{\phi},\Tilde{\psi},\Tilde{\varphi})-\frac{N}{2}\Re\int\Tilde{\phi}\Tilde{\psi}\bar{\Tilde{\varphi}}.
\end{align}
Then \eqref{non:1} and \eqref{non:2} yields
\begin{align*}
    (N-6)K(\Tilde{\phi},\Tilde{\psi},\Tilde{\varphi})=0.
\end{align*}
Since $N<6$, we obtain that $(\Tilde{\phi},\Tilde{\psi},\Tilde{\varphi})=(0,0,0)$. This completes the proof.
\end{proof}

\section{Global existence}
In this section, we aim to obtain the global existence and proof the Theorem \ref{Thm3}. First, we introduce the subsets of the energy space
\begin{align*}
    A^{+}_{\omega,c}=\{(u,v,w)\in H^1(\mathbb{R}^N)\times H^1(\mathbb{R}^N)\times H^1(\mathbb{R}^N)~:~S_{\omega,c}(u,v,w)\leq \mu_{\omega,c},~N_{\omega,c}(u,v,w)\geq0\},\\
    A^{-}_{\omega,c}=\{(u,v,w)\in H^1(\mathbb{R}^N)\times H^1(\mathbb{R}^N)\times H^1(\mathbb{R}^N)~:~S_{\omega,c}(u,v,w)\leq \mu_{\omega,c},~N_{\omega,c}(u,v,w)<0\}.
\end{align*}
Now we show that $A^{\pm}_{\omega,c}$ are invariant sets under the flow.
\begin{lemma}\label{lemma:invariant}
Assume \eqref{D:condition}. Then the sets $A^{\pm}_{\omega,c}$ are invariant under the flow of \eqref{equ:system}.
\end{lemma}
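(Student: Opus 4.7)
The core strategy is classical for Payne--Sattinger-type invariant sets in Hamiltonian dispersive equations. The key quantities $S_{\omega,c}$ and $N_{\omega,c}$ must be analyzed separately: the first behaves as a conservation law and poses no difficulty, while the second requires a topological argument relying on the characterization of boosted ground states as stationary points.

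First I would use the conservation laws. Since $M$, $E$ and $P$ are preserved along the flow of \eqref{equ:system}, the linear combination
\begin{align*}
S_{\omega,c}(u,v,w)=E(u,v,w)+\tfrac{1}{2}\omega M(u,v,w)+\tfrac{1}{2}c\cdot P(u,v,w)
\end{align*}
is also conserved. Hence the condition $S_{\omega,c}(u(t),v(t),w(t))\leq \mu_{\omega,c}$ propagates in time automatically for initial data in either $A^{+}_{\omega,c}$ or $A^{-}_{\omega,c}$.

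Next, to control the sign of $N_{\omega,c}$, I would argue by continuity. Local well-posedness in $H^1\times H^1\times H^1$ yields continuity of the flow in the $H^1$ topology, and the Sobolev embedding gives continuity of $N_{\omega,c}$ as a functional (the cubic term is dominated by $H^1$ norms in $N\leq 5$). Thus $t\mapsto N_{\omega,c}(u(t),v(t),w(t))$ is continuous. To show $A^{-}_{\omega,c}$ is invariant, assume for contradiction that $N_{\omega,c}(u(t^*),v(t^*),w(t^*))\geq 0$ for some $t^*$. By continuity, there exists a first time $t_0\in(0,t^*]$ at which $N_{\omega,c}$ vanishes. The initial data is nontrivial (because $N_{\omega,c}(u_0,v_0,w_0)<0$ forces $V(u_0,v_0,w_0)>0$), so mass conservation gives $(u(t_0),v(t_0),w(t_0))\neq(0,0,0)$. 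Therefore $(u(t_0),v(t_0),w(t_0))\in\mathcal{N}_{\omega,c}$, which yields $S_{\omega,c}(u(t_0),v(t_0),w(t_0))\geq\mu_{\omega,c}$. Combined with conservation of $S_{\omega,c}$ and $S_{\omega,c}(u_0,v_0,w_0)\leq\mu_{\omega,c}$, equality holds, so $(u(t_0),v(t_0),w(t_0))\in a_{\omega,c}$. The invariance of $A^{+}_{\omega,c}$ is handled symmetrically, noting that the alternative $(u(t_0),v(t_0),w(t_0))=(0,0,0)$ only occurs when the initial data vanishes, in which case the solution stays trivial.

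The step where the main obstacle lies is converting ``minimizer at time $t_0$'' into a contradiction with the sign of $N_{\omega,c}$ at time $0$. Here I would invoke Lemma \ref{lemma:a}, which gives $a_{\omega,c}\subset\mathcal{A}_{\omega,c}$: minimizers are critical points of $S_{\omega,c}$ and therefore solve the stationary system \eqref{equ:s}. Consequently $(u(t_0),v(t_0),w(t_0))$ coincides with a profile whose Cauchy evolution under \eqref{equ:system} is the explicit traveling wave \eqref{traveling}. Uniqueness of the Cauchy problem then forces $(u(t),v(t),w(t))$ itself to be of the form \eqref{traveling} for all times in the interval of existence, and in particular at $t=0$. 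But on any traveling wave profile one has $N_{\omega,c}\equiv 0$, contradicting $N_{\omega,c}(u_0,v_0,w_0)<0$ in the $A^{-}$ case, and contradicting the assumed sign change $N_{\omega,c}(u(t^*),v(t^*),w(t^*))<0$ in the $A^{+}$ case. This completes the invariance in both sets.
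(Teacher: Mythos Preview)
Your proof is correct and follows essentially the same route as the paper's own argument: conservation of $S_{\omega,c}$ from the three conservation laws, a continuity argument to locate a time where $N_{\omega,c}$ vanishes, identification of the solution at that time as a minimizer in $a_{\omega,c}\subset\mathcal{G}_{\omega,c}$ via Lemma~\ref{lemma:a}, and then uniqueness of the Cauchy problem to conclude the solution is a traveling wave with $N_{\omega,c}\equiv 0$, yielding the contradiction. Your treatment is in fact slightly more careful than the paper's in explicitly separating out the trivial-data case for $A^{+}_{\omega,c}$.
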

\begin{proof}
Let $(u_0,v_0,w_0)\in A^+_{\omega,c}$. It is obvious that $S_{\omega,c}(u(t),v(t),w(t))\leq \mu_{\omega,c}$ for all $t\in I$, where $I$ is the maximal existence interval of $H^1$-solution, since the corresponding  mass, energy and momentum are conserved.

Now we show that $N_{\omega,c}(u(t),v(t),w(t))\geq0$ for all $t\in I$. If not, there exist $t_1,t_2\in I$ such that $N_{\omega,c}(u(t_1),v(t_1),w(t_1))<0$ and $N_{\omega,c}(u(t_2),v(t_2),w(t_2))=0$. By the uniqueness of Cauchy problem for \eqref{equ:system}, we have $(u(t_2),v(t_2),w(t_2))\neq(0,0,0)$. Moreover, from $S_{\omega,c}(u(t),v(t),w(t))\leq \mu_{\omega,c}$, we obtain $(u(t_2),v(t_2),w(t_2))\in a_{\omega,c}\subset\mathcal{G}_{\omega,c}$. This yields that
\begin{align*}
    &(u(t),v(t),w(t))\\
    =&\left(e^{i\omega (t-t_2)}u(t_2,x-c(t-t_2)),e^{i\omega (t-t_2)}v(t_2,x-c(t-t_2)),e^{2i\omega (t-t_2)}w(t_2,x-c(t-t_2))\right)
\end{align*}
for all $t\in\mathbb{R}$. In particular, $N_{\omega,c}(u(t),v(t),w(t))=0$ for all $t\in \mathbb{R}$, which contradicts $N_{\omega,c}(u(t_1),v(t_1),w(t_1))<0$.

By the similar argument as above, we can proof the case $A^-_{\omega,c}$. Now we complete the proof of this lemma.
\end{proof}

\begin{lemma}\label{lemma:global}
Assume \eqref{D:condition}. If the initial data $(u_0,v_0,w_0)\in A^+_{\omega,c}$, then the $H^1$ solution $(u(t),v(t),w(t))$ of \eqref{equ:system} exists globally in time and
\begin{align*}
    \sup_{t\in\mathbb{R}}\|(u(t),v(t),w(t))\|_{H^1\times H^1\times H^1}\leq C(\|(u_0,v_0,w_0)\|_{H^1\times H^1\times H^1})<\infty.
\end{align*}
\end{lemma}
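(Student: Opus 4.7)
The plan is to exploit the flow-invariance of $A^+_{\omega,c}$ established in Lemma \ref{lemma:invariant} to extract a uniform-in-time bound on $Q_{\omega,c}(u(t),v(t),w(t))$, and then upgrade this bound to a full $H^1\times H^1\times H^1$ bound by combining the coercivity of $\tilde Q_{\omega,c}$ with the mass conservation laws \eqref{mass}. Global existence then follows from the $H^1$ blow-up alternative for \eqref{equ:system}.

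The starting point is the algebraic identity
\[Q_{\omega,c}=3S_{\omega,c}-N_{\omega,c},\]
which follows immediately from $S_{\omega,c}=Q_{\omega,c}-V$ and $N_{\omega,c}=2Q_{\omega,c}-3V$. By Lemma \ref{lemma:invariant}, the trajectory $(u(t),v(t),w(t))$ stays in $A^+_{\omega,c}$ for every $t$ in the maximal existence interval, so $S_{\omega,c}(u(t),v(t),w(t))\le\mu_{\omega,c}$ and $N_{\omega,c}(u(t),v(t),w(t))\ge0$, and the identity above yields $Q_{\omega,c}(u(t),v(t),w(t))\le 3\mu_{\omega,c}$ uniformly in $t$. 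Passing through the relation $\tilde Q_{\omega,c}(\tilde u,\tilde v,\tilde w)=Q_{\omega,c}(u,v,w)$, the same uniform bound holds for $\tilde Q_{\omega,c}$ along the trajectory $(\tilde u(t),\tilde v(t),\tilde w(t))$.

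In case (A) all three $L^2$ coefficients in $\tilde Q_{\omega,c}$ are strictly positive, so $\tilde Q_{\omega,c}$ is equivalent to $\|(\tilde u,\tilde v,\tilde w)\|_{H^1\times H^1\times H^1}^2$ and the desired $H^1$ bound is immediate. In the degenerate cases (B)--(E), one or two of the $L^2$ coefficients vanish, and $\tilde Q_{\omega,c}$ only controls the corresponding components in $\dot H^1$. The missing $L^2$ information is recovered from \eqref{mass}: the conserved quantities $M_1,M_2,M_3$ yield $\|u(t)\|_{L^2}^2\le M_1(u_0,w_0)/\gamma_1$, $\|v(t)\|_{L^2}^2\le M_2(v_0,w_0)/\gamma_2$ and $\|w(t)\|_{L^2}^2\le \min\{M_1(u_0,w_0),M_2(v_0,w_0)\}/\gamma_3$, so each individual $L^2$ norm is bounded by the initial data. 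Combining with the $\dot H^1$ control from $\tilde Q_{\omega,c}$ gives a uniform $H^1$ bound on $(\tilde u,\tilde v,\tilde w)$, which transfers back to $(u,v,w)$ via $|u|=|\tilde u|$ and the pointwise identities $\|\nabla u\|_{L^2}^2\lesssim\|\nabla\tilde u\|_{L^2}^2+|c|^2\|u\|_{L^2}^2$ (and similarly for $v,w$).

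With the uniform $H^1$ bound in hand, the $H^1$ blow-up alternative from \cite{NP2021CCM,NP2022CVPDE} forces the maximal existence interval to be $\mathbb{R}$, proving the lemma. The only genuine subtlety is the bookkeeping in cases (B)--(E): one must verify that in each degenerate case every vanishing $L^2$ coefficient corresponds to a component whose $L^2$ norm is separately (and not merely in some linear combination) controlled by $M_1$, $M_2$ or $M_3$. This is a straightforward check from the explicit forms in \eqref{mass}, since $M_1$ pins down $\|u\|_{L^2}^2$ and $\|w\|_{L^2}^2$ individually as nonnegative summands, and likewise $M_2$ for $\|v\|_{L^2}^2$ and $\|w\|_{L^2}^2$.
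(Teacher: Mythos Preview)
Your proof is correct and follows essentially the same route as the paper: both use Lemma~\ref{lemma:invariant} together with the identity $S_{\omega,c}=\tfrac{1}{3}Q_{\omega,c}+\tfrac{1}{3}N_{\omega,c}$ (equivalently your $Q_{\omega,c}=3S_{\omega,c}-N_{\omega,c}$) to get $Q_{\omega,c}\le 3\mu_{\omega,c}$, then recover the full $H^1$ bound from the $\dot H^1$ control in $\tilde Q_{\omega,c}$ plus mass conservation, and conclude via the blow-up alternative. The only cosmetic difference is that the paper handles all cases uniformly by simply dropping the nonnegative $L^2$ terms from $\tilde Q_{\omega,c}$ and invoking $M(u_0,v_0,w_0)$, whereas you spell out the case distinction (A)--(E) explicitly; this extra bookkeeping is not needed but is not wrong.
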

\begin{proof}
Combining Lemma \ref{lemma:invariant} and the definition of $S_{\omega,c}$, $Q_{\omega,c}$ and $N_{\omega,c}$, we have
\begin{align*}
    \mu_{\omega,c}
    >&S_{\omega,c}(u(t),v(t),w(t))\\
    =&\frac{1}{3}Q_{\omega,c}(u(t),v(t),w(t))+\frac{1}{3}N_{\omega,c}(u(t),v(t),w(t))\\
    \geq& \frac{1}{3}Q_{\omega,c}(u(t),v(t),w(t))\\
    \geq&\frac{1}{6}\left\|\nabla u(t)-\frac{i\gamma_1}{2}cu(t)\right\|_{L^2}^2+\frac{1}{6}\left\|\nabla v(t)-\frac{i\gamma_2}{2}cv(t)\right\|_{L^2}^2+\frac{1}{6}\left\|\nabla w(t)-\frac{i\gamma_3}{2}cw(t)\right\|_{L^2}^2.
\end{align*}
By the conservation of  mass, we deduce
\begin{align*}
    &\|\nabla u(t)\|_{L^2}^2+\|\nabla v(t)\|_{L^2}^2+\|\nabla w(t)\|_{L^2}^2\\
    \leq&C\left(\left\|\nabla u(t)-\frac{i\gamma_1}{2}cu(t)\right\|_{L^2}^2+\left\|\nabla v(t)-\frac{i\gamma_2}{2}cv(t)\right\|_{L^2}^2+\left\|\nabla w(t)-\frac{i\gamma_3}{2}cw(t)\right\|_{L^2}^2\right)\\
    &+C\left(\|u(t)\|_{L^2}^2+\|v(t)\|_{L^2}^2+\|w(t)\|_{L^2}^2\right)\\
    \leq&C\mu_{\omega,c}+CM(u_0,v_0,w_0).
\end{align*}
This means that $H^1$ solution is globally in time.
Now we complete the proof of this lemma.
\end{proof}

\begin{lemma}\label{lemma:scaling}
Let $(\omega,c)\in \mathbb{R}\times\mathbb{R}^4$ satisfy $\omega>0$ and $c\neq0$. Then
\begin{align*}
    \mu_{\omega,c}=|c|^{2}\mu_{\frac{\omega}{|c|^2},\frac{c}{|c|}}.
\end{align*}
\end{lemma}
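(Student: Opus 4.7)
The plan is to prove the identity by exhibiting a scaling bijection between the Nehari manifolds $\mathcal{N}_{\omega/|c|^2,\,c/|c|}$ and $\mathcal{N}_{\omega,c}$ that multiplies the action $S_{\omega,c}$ by the factor $|c|^2$. Concretely, I would fix $\lambda=|c|$ and introduce the map
\begin{align*}
\Phi_\lambda(u,v,w)(x)=\bigl(\lambda^{2}u(\lambda x),\,\lambda^{2}v(\lambda x),\,\lambda^{2}w(\lambda x)\bigr),
\end{align*}
write $\omega'=\omega/|c|^2$ and $c'=c/|c|$, and compute how each ingredient of $S_{\omega,c}$ transforms. Since $N=4$ (recall $c\in\mathbb{R}^4$), the standard change of variables gives
\begin{align*}
K(\Phi_\lambda(u,v,w))&=\lambda^{2}K(u,v,w), & M(\Phi_\lambda(u,v,w))&=M(u,v,w),\\
P(\Phi_\lambda(u,v,w))&=\lambda\,P(u,v,w), & V(\Phi_\lambda(u,v,w))&=\lambda^{2}V(u,v,w),
\end{align*}
and substituting $\omega=\lambda^{2}\omega'$, $c=\lambda c'$ into the definition of $S_{\omega,c}$ collapses every term to a common factor of $\lambda^{2}=|c|^{2}$, yielding
\begin{align*}
S_{\omega,c}\bigl(\Phi_\lambda(u,v,w)\bigr)=|c|^{2}\,S_{\omega',c'}(u,v,w).
\end{align*}

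The second step is to do the same bookkeeping for the Nehari functional. Since $Q_{\omega,c}=S_{\omega,c}+V$ and $N_{\omega,c}=2Q_{\omega,c}-3V$, the identities above immediately give
\begin{align*}
Q_{\omega,c}\bigl(\Phi_\lambda(u,v,w)\bigr)=|c|^{2}Q_{\omega',c'}(u,v,w),\qquad
N_{\omega,c}\bigl(\Phi_\lambda(u,v,w)\bigr)=|c|^{2}N_{\omega',c'}(u,v,w).
\end{align*}
Thus $\Phi_\lambda$ maps the zero set of $N_{\omega',c'}$ bijectively onto the zero set of $N_{\omega,c}$ (its inverse is $\Phi_{1/\lambda}$). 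I would then verify that $\Phi_\lambda$ also respects the ambient space $X_{\omega,c}$: writing $(u,v,w)=(e^{\frac{i\gamma_1}{2}c'\cdot x}\tilde u,\ldots)$, one checks using $c\cdot x=c'\cdot(\lambda x)$ that $\Phi_\lambda(u,v,w)=(e^{\frac{i\gamma_1}{2}c\cdot x}\tilde u_\lambda,\ldots)$ with the obvious $\tilde u_\lambda\in H^{1}$ (or $\dot H^{1}$) exactly when $\tilde u$ is. Moreover the defining threshold for cases $(A)$--$(E)$ is invariant, since $\omega'=\gamma_j|c'|^{2}/k$ iff $\omega=\gamma_j|c|^{2}/k$, so $\Phi_\lambda$ sends $X_{\omega',c'}$ onto $X_{\omega,c}$.

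Taking infima then gives
\begin{align*}
\mu_{\omega,c}=\inf_{\mathcal{N}_{\omega,c}}S_{\omega,c}=\inf_{\mathcal{N}_{\omega',c'}}S_{\omega,c}\circ\Phi_{|c|}=|c|^{2}\inf_{\mathcal{N}_{\omega',c'}}S_{\omega',c'}=|c|^{2}\mu_{\omega/|c|^{2},\,c/|c|},
\end{align*}
which is the desired identity. There is no real analytic obstacle here: the argument is purely a scaling bookkeeping check, with the only mildly subtle point being to confirm that the change of variables $x\mapsto \lambda x$ is compatible with the phase factor in the definition of $X_{\omega,c}$ and preserves the case distinctions in \eqref{D:condition}. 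Once this compatibility is verified, the proof reduces to the four-line computation above.
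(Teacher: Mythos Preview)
Your proposal is correct and follows essentially the same scaling argument as the paper: both rest on the computation that the map $(u,v,w)\mapsto(\lambda^2 u(\lambda\cdot),\lambda^2 v(\lambda\cdot),\lambda^2 w(\lambda\cdot))$ with $\lambda=|c|$ (or its inverse) scales $S_{\omega,c}$ and $N_{\omega,c}$ by $|c|^{2}$ in dimension $N=4$. The paper phrases this via a minimizing sequence while you phrase it as a bijection of Nehari manifolds, but the content is identical; your version is slightly more careful in checking the bijection and the compatibility with $X_{\omega,c}$.
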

\begin{proof}
Let $\{(u_n,v_n,w_n)\}$ be a minimizing sequence for $\mu_{\omega,c}$, that is,
\begin{align*}
    N_{\omega,c}(u_n,v_n,w_n)=0,~~S_{\omega,c}\to\mu_{\omega,c}.
\end{align*}
Let $(\Tilde{u}_n,\Tilde{v}_n,\Tilde{w}_n)=\left(\frac{1}{|c|^2}u_n\left(\frac{x}{|c|}\right),\frac{1}{|c|^2}v_n\left(\frac{x}{|c|}\right),\frac{1}{|c|^2}w_n\left(\frac{x}{|c|}\right)\right)$. Then
\begin{align*}
    &N_{\frac{\omega}{|c|^2},\frac{c}{|c|}}(\Tilde{u}_n,\Tilde{v}_n,\Tilde{w}_n)=|c|^{2}N_{\omega,c}(u_n,v_n,w_n)=0,\\
     &S_{\frac{\omega}{|c|^2},\frac{c}{|c|}}(\Tilde{u}_n,\Tilde{v}_n,\Tilde{w}_n)=|c|^{2}S_{\omega,c}\to|c|^{2}\mu_{\omega,c}.
\end{align*}
Hence, we can obtain the desired result.
\end{proof}

\begin{proof}[{\bf Proof of Theorem \ref{Thm3}}]
To prove (i).
Let $A_0$ to be chosen later. We claim  that if $(u_0,v_0,w_0)\in H^1(\mathbb{R}^4)$ satisfies $\max\{\|u_0\|_{L^2}^2,\|v_0\|_{L^2}^2\}<A_0$, then we claim that
\begin{align}\label{global:1}
    &S_{\frac{\gamma_3|c|^2}{8},c}\left(e^{\frac{i\gamma_1}{2}c\cdot x}u_0,e^{\frac{i\gamma_2}{2}c\cdot x}v_0,e^{i\frac{\gamma_3}{2}c\cdot x}w_0\right)<\mu_{\frac{\gamma_3|c|^2}{8},c},\\\label{global:2}
    &N_{\frac{\gamma_3|c|^2}{8},c}\left(e^{\frac{i\gamma_1}{2}c\cdot x}u_0,e^{\frac{i\gamma_2}{2}c\cdot x}v_0,e^{i\frac{\gamma_3}{2}c\cdot x}w_0\right)\geq0.
\end{align}
for large $|c|$.  If this claim is true. Then we can obtain
\begin{align*}
    \left(e^{\frac{i\gamma_1}{2}c\cdot x}u_0,e^{\frac{i\gamma_2}{2}c\cdot x}v_0,e^{i\frac{\gamma_3}{2}c\cdot x}w_0\right)\in A^+_{\omega,c}
\end{align*}
for $|c|$ large enough. Hence, by Lemma \ref{lemma:global}, we can obtain the desired result.

Now we prove the claim. Indeed, by the definition of $Q_{\omega,c}$, we have
\begin{align*}
    Q_{\frac{\gamma_3|c|^2}{8},c}(u,v,w)=&\frac{1}{2}K(u,v,w)+\frac{\gamma_1\gamma_3|c|^2}{16}\|u\|_{L^2}^2\\
    &+\frac{\gamma_2\gamma_3|c|^2}{16}\|v\|_{L^2}^2+\frac{\gamma_3^2|c|^2}{8}\|w\|_{L^2}^2+\frac{1}{2}c\cdot P(u,v,w)\\
    =&\frac{1}{2}\left\|\nabla \left(e^{-\frac{i\gamma_1}{2}c\cdot x}u\right)\right\|_{L^2}^2+\frac{1}{2}\left\|\nabla \left(e^{-\frac{i\gamma_2}{2}c\cdot x}v\right)\right\|_{L^2}^2+\frac{1}{2}\left\|\nabla \left(e^{-\frac{i\gamma_3}{2}c\cdot x}w\right)\right\|_{L^2}^2\\
    &+|c|^2\frac{\gamma_1(\gamma_3-2\gamma_1)}{16}\|u\|_{L^2}^2+|c|^2\frac{\gamma_2(\gamma_3-2\gamma_2)}{16}\|v\|_{L^2}^2.
\end{align*}
From Lemma \ref{lemma:scaling} and above, \eqref{global:1} is equivalent to
\begin{align}\label{global:3}
    &S_{\frac{\gamma_3|c|^2}{8},c}\left(e^{\frac{i\gamma_1}{2}c\cdot x}u_0,e^{\frac{i\gamma_2}{2}c\cdot x}v_0,e^{i\frac{\gamma_3}{2}c\cdot x}w_0\right)\notag\\
    =&\frac{1}{2}K(u_0,v_0,w_0)-V\left(e^{\frac{i\gamma_1}{2}c\cdot x}u_0,e^{\frac{i\gamma_2}{2}c\cdot x}v_0,e^{i\frac{\gamma_3}{2}c\cdot x}w_0\right)\notag\\
    &+|c|^2\frac{\gamma_1(\gamma_3-2\gamma_1)}{16}\|u_0\|_{L^2}^2+|c|^2\frac{\gamma_2(\gamma_3-2\gamma_2)}{16}\|v_0\|_{L^2}^2\notag\\
    \leq&|c|^2\mu_{\frac{\gamma_3}{8},\frac{c}{|c|}}.
\end{align}
Let
\begin{align*}
    A_0=\frac{16}{2\max\{\gamma_1,\gamma_2\}(\gamma_3-\gamma_1-\gamma_2)}\mu_{\frac{\gamma_3}{8},\frac{c}{|c|}}.
\end{align*}
Then by $\gamma_3>\gamma_1+\gamma_2$ and  Lemma \ref{lemma:b}, we get $A_0>0$. Hence,
\begin{align*}
    |c|^2\left(\mu_{\frac{\gamma_3}{8},\frac{c}{|c|}}-\frac{\gamma_1(\gamma_3-2\gamma_1)}{16}\|u_0\|_{L^2}^2-\frac{\gamma_2(\gamma_3-2\gamma_2)}{16}\|v_0\|_{L^2}^2\right)\to\infty~~\text{as}~~|c|\to\infty.
\end{align*}
On the other hand, the Riemann-Lebesgue theorem implies that
\begin{align}\label{global:4}
    V\left(e^{\frac{i\gamma_1}{2}c\cdot x}u_0,e^{\frac{i\gamma_2}{2}c\cdot x}v_0,e^{i\frac{\gamma_3}{2}c\cdot x}w_0\right)=\Re\int e^{i\frac{\gamma_1+\gamma_2-\gamma_3}{2}c\cdot x}u_0v_0\bar{w}_0dx\to 0~~\text{as}~~|c|\to\infty.
\end{align}
This means that
\begin{align*}
    \frac{1}{2}K(u_0,v_0,w_0)-V\left(e^{\frac{i\gamma_1}{2}c\cdot x}u_0,e^{\frac{i\gamma_2}{2}c\cdot x}v_0,e^{i\frac{\gamma_3}{2}c\cdot x}w_0\right)
\end{align*}
is bounded as $|c|\to\infty$. Therefore, \eqref{global:3} holds for $|c|$ large enough, and so does \eqref{global:1}. In addition, by \eqref{global:4}, we obtain
\begin{align*}
    &N_{\frac{\gamma_3|c|^2}{8},c}\left(e^{\frac{i\gamma_1}{2}c\cdot x}u_0,e^{\frac{i\gamma_2}{2}c\cdot x}v_0,e^{i\frac{\gamma_3}{2}c\cdot x}w_0\right)\\
    =&K(u_0,v_0,w_0)+|c|^2\frac{\gamma_1(\gamma_3-2\gamma_1)}{8}\|u_0\|_{L^2}^2+|c|^2\frac{\gamma_2(\gamma_3-2\gamma_2)}{8}\|v_0\|_{L^2}^2\\
    &-3V\left(e^{\frac{i}{2}c\cdot x}u_0,e^{\frac{i}{2}c\cdot x}v_0,e^{i\frac{\gamma_3}{2}c\cdot x}w_0\right)\\
    \geq&K(u_0,v_0,w_0)+|c|^2\min\{\|u_0\|_{L^2}^2,\|v_0\|_{L^2}^2\}\frac{\min\{\gamma_1,\gamma_2\}(\gamma_3-\gamma_1-\gamma_2)}{4}\\
    &-3V\left(e^{\frac{i}{2}c\cdot x}u_0,e^{\frac{i}{2}c\cdot x}v_0,e^{i\frac{\gamma_3}{2}c\cdot x}w_0\right)\\
    \geq&0,
\end{align*}
for $|c|$ large enough since $\gamma_3>\gamma_1+\gamma_2$. Then, \eqref{global:2} holds. Hence, (i) holds.

%{\color{red}\noindent\rule[0.25\baselineskip]{\textwidth}{1pt}}

To prove (ii). Let
\begin{align*}
    B_0=\frac{8}{\max\{\gamma_1,\gamma_3\}(3\gamma_2-\gamma_1-\gamma_3)}\mu_{\frac{\gamma_2}{4},\frac{c}{|c|}}.
\end{align*}
If the initial data $(u_0,v_0,w_0)\in H^1(\mathbb{R}^4)\times H^1(\mathbb{R}^4)\times H^1(\mathbb{R}^4)$ satisfies $\max\{\|u_0\|_{L^2}^2,\|w_0\|_{L^2}^2\}<B_0$, then we can obtain, for $|c|$ large enough,
\begin{align*}
    &S_{\frac{\gamma_2|c|^2}{4},c}\left(e^{\frac{i}{2}c\cdot x}u_0,e^{\frac{i}{2}c\cdot x}v_0,e^{i\frac{\gamma_3}{2}c\cdot x}w_0\right)<\mu_{\frac{|c|^2}{4},c},\\
    &N_{\frac{\gamma_2|c|^2}{4},c}\left(e^{\frac{i}{2}c\cdot x}u_0,e^{\frac{i}{2}c\cdot x}v_0,e^{i\frac{\gamma_3}{2}c\cdot x}w_0\right)\geq0.
\end{align*}
This means that, for $|c|$ large enough,
\begin{align*}
    \left(e^{\frac{i\gamma_1}{2}c\cdot x}u_0,e^{\frac{i\gamma_2}{2}c\cdot x}v_0,e^{i\frac{\gamma_3}{2}c\cdot x}w_0\right)\in A^+_{\omega,c}.
\end{align*}
Hence, (ii) holds.

Similarly, to prove (iii). Let
\begin{align*}
    C_0=\frac{8}{\max\{\gamma_2,\gamma_3\}(3\gamma_1-\gamma_2-\gamma_3)}\mu_{\frac{\gamma_1}{4},\frac{c}{|c|}}.
\end{align*}
If the initial data $(u_0,v_0,w_0)\in H^1(\mathbb{R}^4)\times H^1(\mathbb{R}^4)\times H^1(\mathbb{R}^4)$ satisfies $\max\{\|v_0\|_{L^2}^2,\|w_0\|_{L^2}^2\}<B_0$, then we can obtain, for $|c|$ large enough,
\begin{align*}
    &S_{\frac{\gamma_1|c|^2}{4},c}\left(e^{\frac{i}{2}c\cdot x}u_0,e^{\frac{i}{2}c\cdot x}v_0,e^{i\frac{\gamma_3}{2}c\cdot x}w_0\right)<\mu_{\frac{|c|^2}{4},c},\\
    &N_{\frac{\gamma_1|c|^2}{4},c}\left(e^{\frac{i}{2}c\cdot x}u_0,e^{\frac{i}{2}c\cdot x}v_0,e^{i\frac{\gamma_3}{2}c\cdot x}w_0\right)\geq0.
\end{align*}
This means that, for $|c|$ large enough,
\begin{align*}
    \left(e^{\frac{i\gamma_1}{2}c\cdot x}u_0,e^{\frac{i\gamma_2}{2}c\cdot x}v_0,e^{i\frac{\gamma_3}{2}c\cdot x}w_0\right)\in A^+_{\omega,c}.
\end{align*}
Hence, (iii) holds.

In particular, if $\gamma1+\gamma_2>\gamma_3$ and $\gamma_1=\gamma_2$,  there exists $D_0=\frac{8}{\gamma_3(2\gamma_1-\gamma_3)}\mu_{\frac{\gamma_1}{4},\frac{c}{|c|}}$ such that $\|w_0\|_{L^2}^2<D_0$. Then by (ii) and (iii), we can obtain the result.

Now we complete the proof of Theorem \ref{Thm3}.
\end{proof}

\appendix

\section{Appendix }

\subsection{Existence of the ground state}\label{section:ground}
In this section, we consider the existence of ground state of  the system \eqref{equ:s} with $c=0$. First, we give the following identities.

\begin{lemma}\label{lemma:pohozaev}
Let $(\phi,\psi,\varphi)$ be a solution of \eqref{equ:s} with $c=0$. Then we have the following Pohozaev identities
\begin{align*}
    K(\phi,\psi,\varphi)+\omega M(\phi,\psi,\varphi)=3\int \phi\psi\varphi,\\
     \frac{N-2}{2}K(\phi,\psi,\varphi)+\frac{N}{2}\omega M(\phi,\psi,\varphi)=N\int \phi\psi\varphi,
\end{align*}
where $K$ and $M$ are given by  \eqref{def:K} and \eqref{mass}, respectively.

\end{lemma}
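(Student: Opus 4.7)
The plan is to derive both identities directly from the elliptic system \eqref{equ:s} with $c=0$, namely
\[
-\Delta\phi+\gamma_1\omega\phi=\varphi\bar\psi,\quad -\Delta\psi+\gamma_2\omega\psi=\varphi\bar\phi,\quad -\Delta\varphi+2\gamma_3\omega\varphi=\phi\psi.
\]
The first identity comes from testing, the second from scaling.

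For the first identity, I would pair each equation with the complex conjugate of the corresponding unknown: multiply the first by $\bar\phi$ and integrate to obtain $\|\nabla\phi\|_{L^2}^2+\gamma_1\omega\|\phi\|_{L^2}^2=\Re\!\int \varphi\bar\psi\bar\phi$, and similarly for the second and third equations with $\bar\psi$ and $\bar\varphi$. The three trilinear integrands $\varphi\bar\psi\bar\phi$, $\varphi\bar\phi\bar\psi$ and $\phi\psi\bar\varphi$ are pairwise complex conjugate, so their real parts agree and all equal $\Re\!\int\phi\psi\bar\varphi$. Summing the three identities and recalling that $M(\phi,\psi,\varphi)=\gamma_1\|\phi\|_{L^2}^2+\gamma_2\|\psi\|_{L^2}^2+2\gamma_3\|\varphi\|_{L^2}^2$ gives exactly
\[
K(\phi,\psi,\varphi)+\omega M(\phi,\psi,\varphi)=3\Re\!\int\phi\psi\bar\varphi,
\]
which coincides with the stated first identity once one records that ground states may be taken real so $\Re\!\int\phi\psi\bar\varphi=\int\phi\psi\varphi$.

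For the second identity, the cleanest route is the scaling argument. Introduce the one-parameter family $(\phi_\lambda,\psi_\lambda,\varphi_\lambda)(x)=(\phi(\lambda x),\psi(\lambda x),\varphi(\lambda x))$ for $\lambda>0$. A direct change of variables yields
\[
K(\phi_\lambda,\psi_\lambda,\varphi_\lambda)=\lambda^{2-N}K,\qquad M(\phi_\lambda,\psi_\lambda,\varphi_\lambda)=\lambda^{-N}M,\qquad \int\phi_\lambda\psi_\lambda\varphi_\lambda=\lambda^{-N}\int\phi\psi\varphi.
\]
Because $(\phi,\psi,\varphi)$ is a critical point of $S_{\omega,0}=\tfrac12 K+\tfrac12\omega M-\int\phi\psi\varphi$ and the path $\lambda\mapsto(\phi_\lambda,\psi_\lambda,\varphi_\lambda)$ passes through it at $\lambda=1$, the scalar function $\lambda\mapsto S_{\omega,0}(\phi_\lambda,\psi_\lambda,\varphi_\lambda)$ has vanishing derivative at $\lambda=1$. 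Differentiating and evaluating gives $\tfrac{2-N}{2}K-\tfrac{N}{2}\omega M+N\int\phi\psi\varphi=0$, which rearranges to the second identity.

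The only place where one has to be careful is the regularity and decay needed to justify the integration by parts in the first step and the differentiation along the scaling path in the second. Since $(\phi,\psi,\varphi)\in H^1$ solves an elliptic system with quadratic nonlinearity in dimension $N\le 5$, standard elliptic regularity upgrades the solutions to $H^2_{\mathrm{loc}}$ (in fact to $C^2$ with exponential decay once $\omega>0$), which makes all manipulations rigorous. Should one wish to bypass the smooth-dependence argument for the scaling, one could instead test each equation against the classical Pohozaev multiplier $x\cdot\nabla\bar\phi$ (respectively $x\cdot\nabla\bar\psi$, $x\cdot\nabla\bar\varphi$), integrate by parts, and sum; this produces the same identity without invoking differentiation under the integral.
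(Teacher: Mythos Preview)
Your proposal is correct and follows exactly the standard route the paper alludes to when it writes ``By the similar argument as the single equation, we can obtain this two identities. Here we omit it.'' The paper provides no further details, so your testing-plus-scaling derivation is precisely the argument being invoked.
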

\begin{proof}
By the similar argument as the single equation, we can obtain this two identities. Here we omit it.
\end{proof}

Next, we give the following lemma that is the existence of ground state.
\begin{lemma}
Let $N\leq 5$, and $\omega>0$. Then there exists a pair of positive radially symmetric functions $(\phi_0,\psi_0,\varphi_0)$ is a ground state for \eqref{equ:s} with $c=0$.
\end{lemma}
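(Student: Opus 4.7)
The plan is to deduce the existence of a ground state from the variational framework already developed in Section 2, and then to upgrade the resulting minimizer to a positive, radially symmetric triple. The route has three steps, and I expect the positivity/nonvanishing step to be the most delicate.

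First I would observe that the hypotheses $\omega>0$, $c=0$, and $1\le N\le 5$ fit precisely into case $(A)$ of condition \eqref{D:condition}, since $\max\{\gamma_1|c|^2/4,\gamma_2|c|^2/4,\gamma_3|c|^2/8\}=0<\omega$. Hence Theorem \ref{lemma:nonempty} applies at $c=0$ and supplies a nontrivial minimizer $(\phi,\psi,\varphi)\in a_{\omega,0}=\mathcal{G}_{\omega,0}$ on the Nehari manifold $\mathcal{N}_{\omega,0}$. The Lagrange multiplier computation carried out in the proof of Lemma \ref{lemma:a} (specialized to $c=0$) shows that this minimizer is a critical point of $S_{\omega,0}$, hence a finite energy solution of \eqref{equ:s} with $c=0$.

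Second, I would upgrade this minimizer to one with nonnegative components. The pointwise diamagnetic bound $|\nabla |u||\le|\nabla u|$ yields $Q_{\omega,0}(|u|,|v|,|w|)\le Q_{\omega,0}(u,v,w)$, while $V(|u|,|v|,|w|)=\int|u||v||w|\ge\Re\int uv\bar w=V(u,v,w)$. Consequently $N_{\omega,0}(|\phi|,|\psi|,|\varphi|)\le 0$. If strict inequality held, Lemma \ref{lemma:upper} would produce some $\lambda_0\in(0,1)$ with $S_{\omega,0}(\lambda_0|\phi|,\lambda_0|\psi|,\lambda_0|\varphi|)<\mu_{\omega,0}$, contradicting minimality. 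Hence $(|\phi|,|\psi|,|\varphi|)\in\mathcal{N}_{\omega,0}$ is itself a minimizer, and I may assume $\phi,\psi,\varphi\ge 0$. Since on the Nehari manifold $V=\tfrac{2}{3}Q_{\omega,0}>0$, none of the three components can vanish identically, and the strong maximum principle applied separately to each equation of \eqref{equ:s} (the right-hand sides $\varphi\psi$, $\varphi\phi$, $\phi\psi$ being nonnegative and nontrivial) yields strict positivity.

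Third, I would impose radial symmetry through Schwarz symmetric decreasing rearrangement. The Pólya–Szegő inequality preserves $L^2$ norms and does not increase Dirichlet energy, so $Q_{\omega,0}(\phi^\ast,\psi^\ast,\varphi^\ast)\le Q_{\omega,0}(\phi,\psi,\varphi)$. For the trilinear coupling the layer-cake formula gives
\[
\int \phi\psi\varphi=\int_0^\infty\!\!\int_0^\infty\!\!\int_0^\infty |\{\phi>a\}\cap\{\psi>b\}\cap\{\varphi>c\}|\,da\,db\,dc,
\]
and since $\{\phi^\ast>a\}$, $\{\psi^\ast>b\}$, $\{\varphi^\ast>c\}$ are concentric balls one has $|A\cap B\cap C|\le\min(|A|,|B|,|C|)=|A^\ast\cap B^\ast\cap C^\ast|$, so $\int\phi^\ast\psi^\ast\varphi^\ast\ge\int\phi\psi\varphi$. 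Thus $N_{\omega,0}(\phi^\ast,\psi^\ast,\varphi^\ast)\le 0$, and the same rescaling device as above furnishes a positive radially symmetric minimizer $(\phi_0,\psi_0,\varphi_0)$, which by step one solves \eqref{equ:s} with $c=0$.

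The step I expect to be the main obstacle is guaranteeing that, after the moduli and rearrangements, no component of the resulting minimizer collapses to zero; this would break the strong maximum principle argument and destroy strict positivity. The resolution is precisely the Nehari identity on minimizers, which forces $V(\phi,\psi,\varphi)>0$ and therefore rules out any vanishing component; once that is in hand, the coupled structure of \eqref{equ:s} together with the componentwise maximum principle finishes the proof.
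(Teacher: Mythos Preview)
Your argument is correct and entirely self-contained, whereas the paper simply defers to the literature (\cite[Theorem~4.5]{HOT2013Poincare} and \cite[Section~4]{NP2021CCM}) and omits the proof. The route you take---specialize the Nehari machinery of Section~2 (Theorem~\ref{lemma:nonempty}, Lemmas~\ref{lemma:a} and~\ref{lemma:upper}) to $c=0$, then replace the minimizer first by its modulus and then by its Schwarz rearrangement---is precisely in the spirit of those cited references, so there is no conceptual divergence, only a difference in presentation: you recycle the paper's own variational framework rather than point outward. One small remark is worth adding: you invoke Theorem~\ref{lemma:nonempty}, which is formally stated under the standing hypothesis $\gamma_1+\gamma_2\neq\gamma_3$ of Section~1.1; you should note explicitly that for case~$(A)$ none of the Section~2 proofs actually uses this hypothesis, so the conclusion $a_{\omega,0}=\mathcal{G}_{\omega,0}\neq\emptyset$ holds for arbitrary positive $\gamma_j$. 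Beyond that caveat, the diamagnetic inequality for the modulus step, the trilinear layer-cake rearrangement inequality $\int\phi^\ast\psi^\ast\varphi^\ast\ge\int\phi\psi\varphi$, and the componentwise strong maximum principle are all sound, and the positivity obstacle you anticipate is indeed resolved exactly as you say, via the Nehari identity $V=\tfrac{2}{3}Q_{\omega,0}>0$ forcing every component to be nontrivial.
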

\begin{proof}
By the similar argument as \cite[Theorem 4.5]{HOT2013Poincare} or \cite[Section 4]{NP2021CCM}, we can obtain this result. Here we omit it.
\end{proof}

\subsection{Best constant in an inequality of Gagliardo-Nirenberg type}\label{section:GN}
In this subsection, we consider the best constant of the Gagliardo-Nirenberg type inequality in $\mathbb{R}^4$. By using the classical Gagliardo-Nirenberg inequality
\begin{align*}
    \|u\|_{L^3}\leq C\|\nabla u\|_{L^2}^{\frac{2}{3}}\|u\|_{L^2}^{\frac{1}{3}},
\end{align*}
we have
\begin{align}\label{GN:1}
    \int uvw\leq& C^3\|\nabla u\|_{L^2}^{\frac{2}{3}}\|u\|_{L^2}^{\frac{1}{3}}\|\nabla v\|_{L^2}^{\frac{2}{3}}\|v\|_{L^2}^{\frac{1}{3}}\|\nabla w\|_{L^2}^{\frac{2}{3}}\|w\|_{L^2}^{\frac{1}{3}}\notag\\
    \leq& C K(u,v,w)M(u,v,w)^{\frac{1}{2}},
\end{align}
where $K$ and $M$ are given by \eqref{def:K} and \eqref{mass}, respectively.

Now, we define
\begin{align*}
    \alpha=\inf\{J(u,v,w)~:~(u,v,w)\in A\},
\end{align*}
where
\begin{align*}
    &J(u,v,w)=\frac{K(u,v,w)M(u,v,w)^{\frac{1}{2}}}{\int uvw},\\
    &A=\left\{(u,v,w)\in H^1\times H^1\times H^1\backslash\{(0,0,0)\}~:~\int uvw>0\right\}.
\end{align*}
By \eqref{GN:1}, we obtain $\alpha>0$.
\begin{theorem}
Let $N=4$, then we have the following  Gagliardo-Nireberg inequality
\begin{align}\label{GN:sharp}
    \int uvw\leq C_{opt} K(u,v,w)M(u,v,w)^{\frac{1}{2}},
\end{align}
where
\begin{align*}
    C_{opt}=\frac{1}{2}\frac{1}{M(\phi,\psi,\varphi)^{\frac{1}{2}}}
\end{align*}
and $(\phi,\psi,\varphi)$ is the ground state of \eqref{equ:s} with $c=0$ and $\omega=1$.
In particular,  The inequality \eqref{GN:sharp} is sharp as the $"\leq"$ can be taken $"="$ by seting $(u,v,w)=(\phi,\psi,\varphi)$.

% (i) There exists a pair of non-nogative, radially symmetric functions $(u_0,v_0,w_0)\in A$ such that
% \[J(u_0,v_0,w_0)=\alpha=\inf\{J(u,v,w)~:~(u,v,w)\in A\}.\]

% (ii) Let $(u_0,v_0,w_0)\in A$ be as (i). There exist $a>0$ and $b>0$ such that such that $(u,v,w)=\left(au(bx),av(bx),aw(bx)\right)$ is a positive solution of \eqref{equ:s} with $c=0$ and $\omega=1$. Moreover, $(u,v,w)$ is a ground state for \eqref{equ:s} with $c=0$.

% (iii) The minimizers of $J$
% \begin{align*}
%     &\{(u,v,w)\in H^1\times H^1\times H^1~:~J(u,v,w)=\alpha\}\\
% =&\{\left(au(bx),av(bx),aw(bx)\right)\in H^1\times H^1\times H^1~:~a,b>0, (u,v,w) ~\text{is a ground state  with}~\omega=1\}.
% \end{align*}
% For any $(u,v,w)$ is the ground state, the following identity holds
% \begin{align}
%     \alpha=2M(u,v,w)^{\frac{1}{2}}.
% \end{align}
\end{theorem}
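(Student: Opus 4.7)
The plan is to show that the infimum $\alpha=\inf J$ on $A$ is attained, that any minimizer is (up to the scaling symmetry) a ground state of \eqref{equ:s} with $c=0$ and $\omega=1$, and finally to compute $\alpha$ explicitly via the Pohozaev identities of Lemma \ref{lemma:pohozaev}. The sharp constant is then $C_{opt}=1/\alpha$, with equality realized at the ground state.

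The first step is a scaling observation: in $N=4$, the map $(u,v,w)\mapsto(\mu u(\lambda\cdot),\mu v(\lambda\cdot),\mu w(\lambda\cdot))$ sends both $K\cdot M^{1/2}$ and $\int uvw$ to $\mu^3\lambda^{-4}$ times their original values, so $J$ is invariant under two independent positive scalings. This will be used twice: once to normalize a minimizing sequence, and again at the end to put the Euler--Lagrange equation into the canonical form with $\omega=1$. Next, I would establish that $\alpha$ is attained. Take a minimizing sequence $(u_n,v_n,w_n)\subset A$. Since $\int u_nv_nw_n\le\int|u_n||v_n||w_n|$, replacing each entry by its absolute value can only decrease $J$. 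Schwarz-symmetrizing then decreases $K$ (Polya--Szego) and preserves $M$, while the layer-cake identity together with the bound $|E_s\cap F_t\cap G_r|\le\min(|E_s|,|F_t|,|G_r|)=|E_s^{*}\cap F_t^{*}\cap G_r^{*}|$ yields $\int uvw\le\int u^{*}v^{*}w^{*}$. So we may assume the minimizing sequence is radially nonincreasing. Exploiting the two scaling parameters, normalize to $K=M=1$; then use the radial compact embedding $H^1_{\mathrm{rad}}(\mathbb{R}^4)\hookrightarrow L^3(\mathbb{R}^4)$ to extract a strong $L^3$ subsequential limit $(\Phi,\Psi,\Upsilon)$. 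Since $\int u_nv_nw_n\to1/\alpha>0$, this limit is nontrivial, and weak lower semicontinuity of $K$ and $M$ combined with strong $L^3$ convergence of the trilinear product show $(\Phi,\Psi,\Upsilon)$ realizes $\alpha$.

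The third step is to identify the minimizer. The Euler--Lagrange equation for $J$ at $(\Phi,\Psi,\Upsilon)$ reads (for the first component)
\begin{equation*}
-2M(\Phi,\Psi,\Upsilon)^{1/2}\Delta\Phi+\frac{K(\Phi,\Psi,\Upsilon)}{M(\Phi,\Psi,\Upsilon)^{1/2}}\Phi=\alpha\,\Psi\Upsilon,
\end{equation*}
with the analogous equations for $\Psi,\Upsilon$. Applying the scaling $(\Phi,\Psi,\Upsilon)\mapsto(\mu\Phi(\lambda\cdot),\mu\Psi(\lambda\cdot),\mu\Upsilon(\lambda\cdot))$ with $\lambda,\mu>0$ chosen to match the two positive coefficients, we obtain a triple $(\phi,\psi,\varphi)$ solving \eqref{equ:s} with $c=0$ and $\omega=1$. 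Because $J$ is minimized at this triple, $(\phi,\psi,\varphi)$ is a ground state in the sense of the appendix. The final step is evaluation. Plugging $N=4$ and $\omega=1$ into Lemma \ref{lemma:pohozaev} gives
\begin{equation*}
K(\phi,\psi,\varphi)+M(\phi,\psi,\varphi)=3\!\int\!\phi\psi\varphi,\qquad K(\phi,\psi,\varphi)+2M(\phi,\psi,\varphi)=4\!\int\!\phi\psi\varphi.
\end{equation*}
Subtracting yields $M(\phi,\psi,\varphi)=\int\phi\psi\varphi$ and $K(\phi,\psi,\varphi)=2M(\phi,\psi,\varphi)$, so
\begin{equation*}
\alpha=J(\phi,\psi,\varphi)=\frac{K\,M^{1/2}}{\int\phi\psi\varphi}=\frac{2M\cdot M^{1/2}}{M}=2\,M(\phi,\psi,\varphi)^{1/2},
\end{equation*}
hence $C_{opt}=1/\alpha=1/(2M(\phi,\psi,\varphi)^{1/2})$ and the inequality is saturated at $(\phi,\psi,\varphi)$.

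The principal difficulty is the attainment step: because $J$ carries two independent scale invariances, a generic minimizing sequence can spread out, concentrate at a point, or split into separate bubbles, and one cannot simply extract a weak $H^1$ limit. Symmetrization plus radial Sobolev compactness in the subcritical exponent $L^3(\mathbb{R}^4)$ is what rescues the argument; once a nontrivial minimizer is in hand, the Euler--Lagrange equation and the Pohozaev identities of Lemma \ref{lemma:pohozaev} reduce the sharp constant to pure arithmetic.
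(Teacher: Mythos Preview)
Your proof is correct and follows essentially the same route as the paper: symmetrize a minimizing sequence for $J$, normalize via the two-parameter scaling so that $K=M=1$, pass to the limit using the compact embedding $H^1_{\mathrm{rad}}(\mathbb{R}^4)\hookrightarrow L^3(\mathbb{R}^4)$, rescale the Euler--Lagrange system to \eqref{equ:s} with $c=0$, $\omega=1$, and evaluate $\alpha$ via the Pohozaev identities of Lemma~\ref{lemma:pohozaev}. One small slip: in your Euler--Lagrange equation the linear term should carry the weight $\gamma_1$ (and $\gamma_2$, $2\gamma_3$ in the other two components) coming from $M'$, which is precisely what makes the rescaled system match \eqref{equ:s}; with that correction the argument goes through unchanged.
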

\begin{proof}
Let $(u_n,v_n,w_n)\subset A$ be a minimizing sequence for $J$. By the symmetric-decreasing rearrangement,  we may assume that $u_n, v_n, w_n$ are non-negative and radially symmetric functions in $H^1$. We define
\begin{align*}
    \Tilde{u}_n=a_nu_n(b_nx),~~\Tilde{v}_n=a_nv_n(b_nx),~~\Tilde{w}_n=a_nw_n(b_nx),
\end{align*}
where
\begin{align*}
    a_n=\frac{M(u_n,v_n,w_n)^{\frac{1}{2}}}{K(u_n,v_n,w_n)},~~b_n=\frac{M(u_n,v_n,w_n)^{\frac{1}{2}}}{K(u_n,v_n,w_n)^{\frac{1}{2}}},
\end{align*}
so that
\begin{align*}
    K(\Tilde{u}_n,\Tilde{v}_n,\Tilde{w}_n)=1,~~M(\Tilde{u}_n,\Tilde{v}_n,\Tilde{w}_n)=1
\end{align*}
and
\begin{align*}
    \frac{1}{\int \Tilde{u}_n\Tilde{v}_n\Tilde{w}_n}=J(\Tilde{u}_n,\Tilde{v}_n,\Tilde{w}_n)\to \alpha.
\end{align*}
Since $\{(\Tilde{u}_n,\Tilde{v}_n,\Tilde{w}_n)\}$ is bounded in $H^1\times H^1\times H^1$, there exists a subsequence still denote $\{(\Tilde{u}_n,\Tilde{v}_n,\Tilde{w}_n)\}$ such that
\begin{align*}
    \Tilde{u}_n\to u_0,~~\Tilde{v}_n\to v_0,~~\Tilde{w}_n\to w_0,~~\text{weakly in}~~H^1(\mathbb{R}^4)
\end{align*}
for some $(u_0,v_0,w_0)\in H^1\times H^1\times H^1 $. By Struss' compact embedding $H^1_r\subset L^3(\mathbb{R}^4)$
\begin{align*}
    \Tilde{u}_n\to u_0,~~\Tilde{v}_n\to v_0,~~\Tilde{w}_n\to w_0,~~\text{strongly in}~~L^3(\mathbb{R}^4).
\end{align*}
This yields
\begin{align*}
    \int u_0v_0w_0=\lim_{n\to\infty}\int \Tilde{u}_n\Tilde{v}_n\Tilde{w}_n=\frac{1}{\alpha}>0.
\end{align*}
On the other hand, by the lower semi-continuity of norms, we have
\begin{align*}
    K(u_0,v_0,w_0)\leq \lim_{n\to\infty} K(\Tilde{u}_n,\Tilde{v}_n,\Tilde{w}_n)=1,\\
    M(u_0,v_0,w_0)\leq \lim_{n\to\infty} M(\Tilde{u}_n,\Tilde{v}_n,\Tilde{w}_n)=1.
\end{align*}
Therefore, we obtain
\begin{align*}
    \alpha= J(u_0,v_0,w_0)=\frac{K(u_0,v_0,w_0)Q(u_0,v_0,w_0)^{\frac{1}{2}}}{\int u_0v_0w_0}\leq \lim_{n\to\infty}J(\Tilde{u}_n,\Tilde{v}_n,\Tilde{w}_n)=\alpha.
\end{align*}
We conclude that $(u_0,v_0,w_0)$ satisfies
\begin{align*}
    J(u_0,v_0,w_0)=\alpha, ~~K(u_0,v_0,w_0)=1,~~\int u_0,v_0,w_0=\frac{1}{\alpha},\\
    \Tilde{u}_n\to u_0,~\Tilde{v}_n\to v_0,~\Tilde{w}_n\to w_0~~~\text{strongly in}~~ H^1(\mathbb{R}^4).
\end{align*}
For any $(u,v,w)\in H^1\times H^1\times H^1$
\begin{align*}
    \frac{d}{ds}\Big|_{s=0}J(u_0+su,v_0+sv,w_0+sw)=0
\end{align*}
which implies that
\begin{align*}
    &\frac{M(u_0,v_0,w_0)^{\frac{1}{2}}}{\int u_0v_0w_0}\left(K^\prime(u_0,v_0,w_0)(u,v,w)+\frac{K(u_0,v_0,w_0)}{2M(u_0,v_0,w_0)}M^\prime(u_0,v_0,w_0)(u,v,w)\right)\\
    =&\frac{K(u_0,v_0,w_0)M(u_0,v_0,w_0)^{\frac{1}{2}}}{\left(\int u_0v_0w_0\right)^2}\left(\int u_0v_0w_0\right)^\prime(u,v,w).
\end{align*}
This yields
\begin{align*}
    K^\prime(u_0,v_0,w_0)(u,v,w)+\frac{1}{2}M^\prime(u_0,v_0,w_0)(u,v,w)=\alpha\left(\int u_0v_0w_0\right)^\prime(u,v,w).
\end{align*}
This equivalent to
\begin{align*}
    &2\int(\nabla u_0\cdot\nabla u+\nabla v_0\cdot\nabla v+\nabla w_0\cdot\nabla w)dx+\int (\gamma_1uu_0+\gamma_2vv_0+2\gamma_3ww_0)\\
    =&\alpha\int (uv_0w_0+u_0vw_0+u_0v_0w).
\end{align*}
We now define $(\phi,\psi,\varphi)=(\alpha u_0(\sqrt{2}x),\alpha v_0(\sqrt{2}x),\alpha w_0(\sqrt{2}x))$. Then $(\phi,\psi,\varphi)$ is a ground state solution of \eqref{equ:s} with $\omega=1$ and $c=0$. Since $(u_0,v_0,w_0)$ is a critical point of $J$, $(\phi,\psi,\varphi)$ is also a critical point of $J$. By the Pohozaev identity (see Lemma \ref{lemma:pohozaev}), we obtain
\begin{align*}
    K(\phi,\psi,\varphi)=2\int \phi\psi\varphi.
\end{align*}
Therefore
\begin{align*}
    J(\phi,\psi,\varphi)=2M(\phi,\psi,\varphi)^{\frac{1}{2}}.
\end{align*}
This complete the proof of this theorem.
\end{proof}

% \section{Nondegeneracy of \texorpdfstring{$L_+$}{L+}}

%\begin{proof}[Proof of Theorem ]

% \end{proof}

\renewcommand{\proofname}{\bf Proof.}

\noindent
{\bf Acknowledgments}

%  V. Georgiev was partially supported by   Gruppo Nazionale per l'Analisi Matematica 2020, by the project PRIN  2020XB3EFL with the Italian Ministry of Universities and Research, by Institute of Mathematics and Informatics, Bulgarian Academy of Sciences, by Top Global University Project, Waseda University and the Project PRA 2022 85 of University of Pisa.
 Y.Li was supported by China Postdoctoral Science Foundation (No. 2021M701365) and the funding of innovating activities in Science and Technology of Hubei Province.
%%%%%%%%%%%%%%%%%%%%%%%%%%%%%%%%%%%%%%%%%%%%%%

%%%%%%%%%%%%%%%%%%%%%%%%%%%%%%%%%%%%%%%%%%%%%%

\vspace*{.5cm}

% \bibliographystyle{plain}
% \bibliography{ref}

\bigskip

% \begin{flushleft}
% Vladimir Georgiev,\\
% Dipartimento di Matematica, Universit\`{a} di Pisa, Largo B. Pontecorvo 5, 56127 Pisa, Italy\\
%  Faculty of Science and Engineering, Waseda University, 3-4-1, Okubo, Shinjuku-ku, Tokyo 169-8555, Japan\\
%  IMICBAS, Acad. Georgi Bonchev Str., Block 8, 1113 Sofia, Bulgaria\\
% E-mail: georgiev@dm.unipi.it
% \end{flushleft}

\begin{flushleft}
Yuan Li,\\
School of Mathematics and Statistics, Central China Normal University, Wuhan, PR China\\
E-mail: yli2021@ccnu.edu.cn
\end{flushleft}

\bigskip

\medskip

\end{document}